\newtheorem{tw}{Theorem}[section]	
\newtheorem{corollary}[tw]{Corollary}
\newtheorem{proposition}[tw]{Proposition}
\newtheorem{lemma}[tw]{Lemma}
\theoremstyle{definition} 
\newtheorem{definition}[tw]{Definition}
\newtheorem{example}[tw]{Example}
\newtheorem{remark}[tw]{Remark}
\newproof{pf}{Proof}
\newcommand{\rA}{\mathrm{A}}
\newcommand{\rB}{\mathrm{B}}
\newcommand{\rC}{\mathrm{C}}
\newcommand{\rF}{\mathrm{F}}
\newcommand{\rK}{\mathrm{K}}
\newcommand{\rS}{\mathrm{S}}
\newcommand{\rT}{\mathrm{T}}
\newcommand{\rt}{\mathrm{t}}
\newcommand{\rs}{\mathrm{s}}
\newcommand{\ot}{\widehat{\rt}}
\newcommand{\os}{\widehat{\rs}}
\newcommand\bu{\mathbf{u}}
\newcommand\bx{\mathbf{x}}
\newcommand\bz{\mathbf{z}}
\newcommand\sI{\mathsf{I}}
\newcommand\sJ{\mathsf{J}}
\newcommand\sLi{\mathsf{Int}_{[0,1]}}
\newcommand{\adm}{\mathcal{S}_{\text{adm}}}
\newcommand\mR{{\mathbb R}}
\newcommand\moR{\overline{\mathbb R}}
\renewcommand\ge{\geqslant}
\renewcommand\le{\leqslant}
\newcommand\leqs{\leqa_{\mathrm{Int}}}
\newcommand\leqa{\preceq}
\newcommand{\tx}{\textstyle{}}
\definecolor{darkgreen}{rgb}{0,0.5,0}
\begin{document}
\begin{frontmatter}

\title{
On admissible pairs of aggregation functions based on quasi-linear means and related families
}

\author{Micha{\l} Boczek\fnref{label1}}
\ead{michal.boczek.1@p.lodz.pl}

\author{Marek Kaluszka\fnref{label1}}
\ead{marek.kaluszka@p.lodz.pl}

\author{Jakub {\L}ompie\'{s}\corref{cor1}\fnref{label1}}
\ead{jakub.lompies@p.lodz.pl}
\cortext[cor1]{jakub.lompies@p.lodz.pl}

\address[label1]{Institute of Mathematics, Lodz University of Technology, 93-590 Lodz, Poland}

\begin{abstract} 
Admissible orders play a key role in ranking subintervals of the unit interval.
In 2013, Bustince et al.~proposed constructing such relations by means of admissible pairs of aggregation functions. 
The only significant example in the literature is a~pair of weighted arithmetic means with different weights.
In this paper, we present a~method for constructing admissible pairs of aggregation functions, which allows us to verify the admissibility of various function classes, including quasi-linear means, Archimedean $t$-norms (and $t$-conorms), and certain strictly Schur-convex (or Schur-concave) functions. Furthermore, we examine the relationship between admissible orders generated by admissible pairs of aggregation functions and the $(\alpha, \beta)$-order, identifying cases where these two notions do not coincide.
\end{abstract}

\begin{keyword}
Admissible order\sep 
Admissible pair of aggregation functions\sep 
Aggregation function \sep 
Archimedean t-norm \sep
Quasi-linear mean\sep
Ranking for intervals.
\end{keyword}
\end{frontmatter}

\section{Introduction}\label{sec:1}

The study of admissible orders on the family of closed subintervals of $[0,1]$, understood as total orders refining the interval order, 
began with the innovative work of Bustince et al.~\cite{bustince2013}.
These total orders have been applied in various areas, such as decision-making \cite{bentkowska2015, bustince2013b, pekala2019, wu2024}, classification \cite{derrac2016, takac2022}, and image processing \cite{bustince2009}, where ranking intervals is required.
A~main property of an admissible order is that it can be constructed using an aggregation function (AF, in short) of the endpoints of the intervals
~\cite{bustince2013, gupta2023, takac2022}. 
Based on this approach, numerous constructions of interval-valued operators with practical applications have been proposed in the literature, including:
interval-valued aggregation function \cite{BKL25,bustince2020, gupta2023}, 
interval-valued implication \cite{zapata2017}, 
interval \textbf{R}-Sheffer strokes \cite{yzhao2024}, 
interval-valued negation function \cite{asiain2018},
interval-valued fuzzy logic \cite{he2023}, interval-valued seminormed fuzzy integral \cite{BJK21}, and 
interval-valued Choquet integral \cite{BJK22, takac2022}.

A~pair of AFs is admissible if and only if a~specific requirement holds (see condition~\ref{Adm} in Proposition \ref{prop:adm}).
All known 
admissible pairs of AFs in the literature are  indistinguishable from the pair of weighted arithmetic means with different weights under condition~\ref{Adm}. 
Although such pairs have received considerable attention, their limitations in flexibility make the search for structurally new admissible pairs particularly important. 
This is 
especially 
relevant in the context of interval-valued operators, which are gaining prominence in decision-making, machine learning, and information fusion.

This paper addresses this issue by presenting several new examples of admissible pairs of AFs. 
We focus on widely used AFs such as the quasi-linear means (applied in machine learning),   Archimedean $t$-norms, Archimedean $t$-conorms (frequently used in the theory of fuzzy sets), and some strictly Schur-convex/concave functions (important in statistical applications).
A~secondary aim of this paper is to identify an admissible pair of AFs that generates a~total order distinct from the $(\alpha, \beta)$-order.
This question is motivated by both theoretical   and practical considerations in applications, where the $(\alpha, \beta)$-order may not capture the desired ordering behavior.

The paper is organized as follows. 
In Section~\ref{sec:prel}, we introduce the preliminary definitions and concepts. 
Section~\ref{sec:aux} presents a method to construct
an admissible pair of AFs, which is used in Section~\ref{sec:main} to determine admissibility for some pairs of AFs.
Finally, in Section~\ref{sec:coin},  we include several results concerning the relationship between admissible orders and $(\alpha, \beta)$-orders, and provide an example of a~total order generated by an admissible pair of AFs that does not coincide with any $(\alpha, \beta)$-order.

\section{Background and notation}\label{sec:prel}

Throughout the paper,  
$\sLi=\{[a_1, a_2]\mid 0\le a_1\le a_2\le 1\}$. 
Intervals from $\sLi$ will be represented by bold letters, while their left and right endpoints will be denoted by the corresponding non-bold letters with subscripts 1 and 2, respectively, e.g., $\bx= [x_1, x_2].$ 
Set $a\wedge b = \min\{a,b\}$ and $a\vee b = \max\{a,b\}$ for $a,b\ge 0$. 

\subsection{\textbf{Admissible order}}\label{sec:2.1}

The natural partial order on $\sLi$  is the \textit{interval order} ($\leqs$)  given by
\begin{align*}
 \bu\leqs \bx\quad \Leftrightarrow \quad  u_1\le x_1 \text{ and } u_2\le x_2.
\end{align*}
In numerous situations, refining the interval order into a~total order is necessary. To address this, Bustince et al. \cite{bustince2013} introduced the concept of an admissible order on $\sLi$.

\begin{definition}
An \textit{admissible order} 
is a~total order $\leqa$ on $\sLi$ such that it refines the interval order, i.e., for each $\bu,\bx\in \sLi,$ it holds $\bu\leqa \bx$ whenever $\bu \leqs \bx$. 
\end{definition}

An~important admissible order  is the \textit{$(\alpha, \beta)$-order} on  $\sLi$, defined as follows 
\begin{align}\label{aborder}
\bu \leqa_{(\alpha, \beta)} \bx \qquad \Leftrightarrow \qquad 
\begin{cases}
    \rK_{\alpha}(\bu) < \rK_{\alpha}(\bx), \text{ or } \\
    \rK_{\alpha}(\bu) = \rK_{\alpha}(\bx) \text{ and } \rK_{\beta}(\bu) \le \rK_{\beta}(\bx),
\end{cases}
\end{align}
where $\alpha, \beta \in [0, 1]$, $\alpha \neq \beta$,
and $\rK_{w}(\bz) =(1- w) z_1 + w z_2$   for $\bz=[z_1,z_2]\in \sLi$.\footnote{The $(\alpha,\beta)$-order  generalizes the following well-known admissible orders: 
\textit{lexicographical order}  for  $(\alpha,\beta)=(0,1)$, \textit{antilexicographical order} 
for $(\alpha,\beta)=(1,0)$,  
\textit{Xu-Yager order} for $(\alpha,\beta)=(0.5, 1)$ \cite{xu2006}, and \textit{information
quality order} for $(\alpha,\beta)=(0.5, 0)$  \cite{asmus2022}.}
By replacing $\rK_{\alpha}$ and $\rK_{\beta}$ in~\eqref{aborder} with appropriate and different aggregation functions (see Definition~\ref{def:ag}), Bustince et al.~\cite{bustince2013} proposed a~universal method for constructing an admissible order based on the aggregation of interval endpoints.
In the next section, we recall this method, which plays a~crucial role in the context of the present work.

\begin{remark} 
Further examples of admissible orders, possibly not defined on $\sLi$, have been presented in \cite{asmus2022, BJK21, miguel2016, milfont2021, santana2020, sussner2024, sussner2025, zhang2025, zumelzu2022}. 
It is worth noting that~\cite[Sec.~2]{BJK21}
presents a~specific characterization of admissible orders on $\sLi$.
\end{remark}

\subsection{\textbf{The relation generated by a~pair of aggregation functions}}\label{sec:2.2}

\begin{definition}\label{def:ag}
A~function $\rA\colon \sLi\to [0,1]$ is called an~\textit{aggregation function} (AF, in short)  
on $\sLi$ 
if $\rA([0,0]) = 0,$ $\rA([1,1]) = 1$, and $\rA$ is increasing, i.e., $\rA(\bu) \le \rA(\bx)$ whenever $u_1\le x_1$ and $u_2 \le x_2$.   
\end{definition}

Due to the existence of a~bijection between the spaces $\sLi$ and $\mathsf{K}_{[0,1]} = \{(x,y)\in [0,1]^2 \mid x\le y\}$, the definition of an AF on $\sLi$ is equivalent to that on $\mathsf{K}_{[0,1]}$. 
Therefore, for simplicity, we often write
$\rA(u_1, u_2)$ instead of $\rA([u_1, u_2])$. 
Moreover, we write ``AF'' instead of ``AF on $\sLi$''.

\begin{remark}
Note that the concept of AF proposed in Definition~\ref{def:ag} is not the standard one commonly found in the literature \cite{alsina2006, bustince2013, grabisch2009, klement2000}. 
The only difference is that it is defined on $\sLi$ (equivalently on $\mathsf{K}_{[0,1]}$) instead of $[0,1]^2$, while the rest of the conditions remain consistent with the classical definitions. 
This adjustment is made to highlight the properties that are crucial and employed in the subsequent analysis.
\end{remark}

We say that the binary relation $\leqa_{\rA,\rB}$ is \textit{generated by a~pair}  $(\rA,\rB)$ of AFs
if and only if for all $\bu,\bx\in \sLi$,
\begin{align*}
    \bu\leqa_{\rA,\rB} \bx \qquad \Leftrightarrow \qquad 
\begin{cases}
    \rA(\bu) < \rA(\bx), \text{ or } \\
    \rA(\bu) = \rA(\bx) \text{ and } \rB(\bu) \le \rB(\bx).
\end{cases}
\end{align*}

\begin{definition}\label{def:order}(cf. \cite{bustince2013})
A~pair  $(\rA,\rB)$  of AFs  is said to be {\it admissible}  if $\leqa_{\rA,\rB}$ is an admissible order.
The set of all admissible pairs of AFs is denoted by $\adm$. 
\end{definition}

\begin{proposition}\label{prop:adm}
A~pair $(\rA,\rB)$ belongs to $\adm$ 
if and only if the following condition is valid
\begin{enumerate}[leftmargin = 3.2em, label = (Adm)]
    \item for all $\bu,\bx\in \sLi$, the equalities $\rA(\bu) =\rA(\bx) $ and $\rB(\bu)= \rB(\bx)$ can only hold if $\bu=\bx$.\label{Adm}
\end{enumerate}
\end{proposition}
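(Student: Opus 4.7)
The proposition asserts that among the four axioms of an admissible total order (reflexivity, antisymmetry, transitivity, totality) together with the refinement of $\leqs$, only antisymmetry depends on the pair $(\rA,\rB)$ in a nontrivial way, and it is precisely condition \ref{Adm} that encodes it. I would therefore prove both implications simultaneously by isolating antisymmetry as the bottleneck.

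First, I would verify directly from the lexicographic definition of $\leqa_{\rA,\rB}$ that, regardless of whether \ref{Adm} holds, the relation is always reflexive (since $\rA(\bu)=\rA(\bu)$ and $\rB(\bu)\le\rB(\bu)$), total (splitting by trichotomy on the real numbers $\rA(\bu)$ vs.\ $\rA(\bx)$, and when equal, trichotomy on $\rB(\bu)$ vs.\ $\rB(\bx)$), and transitive (a routine case split: if $\bu\leqa_{\rA,\rB}\bv$ via strict $\rA$-inequality, the strict inequality propagates; otherwise all $\rA$-values in the chain coincide and transitivity reduces to that of $\le$ on $\rB$-values). Moreover, if $\bu\leqs\bx$, then monotonicity of the AFs gives $\rA(\bu)\le\rA(\bx)$ and $\rB(\bu)\le\rB(\bx)$, whence $\bu\leqa_{\rA,\rB}\bx$ in either branch of the definition, so $\leqa_{\rA,\rB}$ always refines $\leqs$.

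Next I would treat the two directions of the equivalence. For the implication $(\Leftarrow)$, assume \ref{Adm}. Suppose $\bu\leqa_{\rA,\rB}\bx$ and $\bx\leqa_{\rA,\rB}\bu$. Neither $\rA(\bu)<\rA(\bx)$ nor $\rA(\bx)<\rA(\bu)$ can hold (each would contradict the other inequality), so $\rA(\bu)=\rA(\bx)$; then the two inequalities force $\rB(\bu)\le\rB(\bx)$ and $\rB(\bx)\le\rB(\bu)$, i.e.\ $\rB(\bu)=\rB(\bx)$. Condition~\ref{Adm} yields $\bu=\bx$, so antisymmetry holds, and combining with the observations above, $\leqa_{\rA,\rB}$ is an admissible order, giving $(\rA,\rB)\in\adm$. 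For the converse $(\Rightarrow)$, assume $(\rA,\rB)\in\adm$ and take $\bu,\bx\in\sLi$ with $\rA(\bu)=\rA(\bx)$ and $\rB(\bu)=\rB(\bx)$. By the definition of $\leqa_{\rA,\rB}$ both $\bu\leqa_{\rA,\rB}\bx$ and $\bx\leqa_{\rA,\rB}\bu$; antisymmetry of the total order $\leqa_{\rA,\rB}$ then forces $\bu=\bx$, proving~\ref{Adm}.

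There is no real obstacle here: the proof is essentially a bookkeeping exercise on the lexicographic construction. The only point worth underlining is that \ref{Adm} is exactly the translation of antisymmetry into a statement about the pair of AFs, and this is what makes the criterion both necessary and sufficient.
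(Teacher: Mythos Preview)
Your proposal is correct and follows the same strategy as the paper's proof, which simply states that reflexivity, transitivity, and strong connectedness always hold while antisymmetry is equivalent to \ref{Adm}. You are in fact slightly more thorough than the paper, since you also verify explicitly that $\leqa_{\rA,\rB}$ refines $\leqs$ via monotonicity of $\rA$ and $\rB$.
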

\begin{proof}
    It is easy to check that the relation $\leqa_{\rA, \rB}$ is reflexive, transitive, and strongly connected.
Moreover, it is antisymmetric if and only if condition~\ref{Adm} holds.
\end{proof}

\begin{remark}
In  \cite{bustince2013}, the authors additionally assumed the continuity of the AFs, although this is not necessary for the admissibility of the relation $\leqa_{\rA,\rB}$. 
This assumption is absent in the later literature as well \cite{bustince2013b, takac2022}.
\end{remark}

Based on Proposition \ref{prop:adm}, the admissibility of the pair $(\rA,\rB)$ of AFs guarantees that $(\rB,\rA)$ is also admissible.
To the best of our knowledge, the only admissible pair of AFs known in the literature is $(\rK_{\alpha}, \rK_{\beta})$  for $\alpha\neq \beta$.
Some recent results also provide examples of non-admissible pairs \cite{gupta2023}.
In the next example, we present additional cases of admissible pairs of AFs.

\begin{example}\label{ex:2.6}
Let $\rB$ be an AF.
Then, $(\rK_0,\rB)\in \adm$ if and only if  the function $[x_1, 1]\ni x\mapsto \rB(x_1, x)$ is strictly increasing for any $x_1$. 
Moreover, $(\rK_1, \rB)\in \adm$ if and only if the function $[0, x_2]\ni x\mapsto \rB(x, x_2)$ is strictly increasing  for any $x_2$.
\end{example}

As far as we know,  the admissibility of pairs of AFs, other than those in Example~\ref{ex:2.6} and $(\rK_{\alpha}, \rK_{\beta})$, has not yet been examined.
We focus on the admissibility of pairs from several classes of AFs commonly discussed in the literature, including quasi-linear means (Sec.~\ref{sec:quasi}), Archimedean $t$-norms, Archimedean $t$-conorms (Sec.~\ref{sec:arch}), and certain strictly Schur-convex or Schur-concave functions (Sec.~\ref{sec:schur}).
Before proceeding with a~detailed discussion, we first analyze the validity of  condition \ref{adm2}, given by
\begin{enumerate}[leftmargin = 4em, label = (Adm2)]
    \item \label{adm2} for any $s_1,s_2,t_1,t_2 \in \sI$ such that $s_1 \le s_2$ and $t_1 \le t_2,$  the equalities $\rK_{v_1}(s_1, s_2) = \rK_{v_1}(t_1, t_2)$ and $\rK_{v_2}(h(s_1), h(s_2)) = \rK_{v_2}(h(t_1), h(t_2))$ can only hold if $s_1 = t_1$ and $s_2 = t_2$.
\end{enumerate}
This condition follows from the property~\ref{Adm} for the aforementioned pairs of AFs.
This will be the topic of the next section.

\section
{Analytical tools for identifying admissible pairs of AFs}\label{sec:aux}

From now on, 
$\sI$ is any non-degenerate interval in $\mR$ and $\sI^2_{<} = \{(t_1, t_2)\in \sI^2\mid  t_1 <t_2\}$.

\begin{lemma}\label{lem:wr}
Let $h\colon \sI \to \mR$ and
$v_1,v_2 \in (0,1)$. 
Condition~\ref{adm2}
is satisfied  if and only if 
$H(x, t_1, t_2) \neq 0$
for any $x\in (0, t_2 - t_1]$  and  any $(t_1,t_2) \in\sI^2_{<}$,
where for $x\in [0,t_2-t_1]$,
\begin{align}\label{ap:n1}
    H(x, t_1, t_2) = (1-v_2)\big(h(t_1 + v_1 x)-h(t_1)\big) +v_2\big(h(t_2 - (1-v_1)x)-h(t_2)\big).\; 
\end{align}
Moreover, if the function $h$ is convex (resp., strictly convex, concave, or strictly concave), then $H$ is also convex (resp., strictly convex, concave, or strictly concave) in the first argument for any $(t_1,t_2)\in \sI^2_{<}$.
\end{lemma}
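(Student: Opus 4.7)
\begin{pf}
The plan is to solve the first linear equation in \ref{adm2} explicitly and substitute into the second, which reduces the admissibility condition to the nonvanishing of $H$. Fix $(t_1,t_2)\in \sI^2_{<}$ and consider a pair $(s_1,s_2)\in \sI^2$ with $s_1\le s_2$ satisfying $\rK_{v_1}(s_1,s_2)=\rK_{v_1}(t_1,t_2)$. The level set of $\rK_{v_1}$ is a line, which I would parametrize by $s_1 = t_1 + v_1 x$ and $s_2 = t_2 - (1-v_1)x$ for a unique $x\in\mR$. Under this parametrization the inequality $s_1\le s_2$ translates to $x\le t_2-t_1$, the distinctness $(s_1,s_2)\neq (t_1,t_2)$ to $x\neq 0$, and a direct substitution turns the second equation $\rK_{v_2}(h(s_1),h(s_2))=\rK_{v_2}(h(t_1),h(t_2))$ into precisely $H(x,t_1,t_2)=0$.

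The restriction to $x\in(0,t_2-t_1]$ is then obtained from the symmetry of \ref{adm2} under the swap $(s_1,s_2)\leftrightarrow (t_1,t_2)$. If the parameter $x$ above is negative, the pair $(s_1,s_2)$ is \emph{wider} than $(t_1,t_2)$; swapping the roles produces a new outer pair in $\sI^2_{<}$ together with a new parameter $x'=-x\in (0, t_2'-t_1']$. The degenerate case $t_1=t_2$ is absorbed the same way, leading after the swap to the endpoint value $x'=t_2'-t_1'$ (where $s_1'=s_2'$). Conversely, whenever $H(x,t_1,t_2)=0$ for some $(t_1,t_2)\in \sI^2_{<}$ and $x\in(0,t_2-t_1]$, the corresponding $(s_1,s_2)\neq (t_1,t_2)$ violates \ref{adm2}. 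This gives the equivalence in both directions.

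The second assertion is a routine check. As functions of $x$, both $h(t_1+v_1x)$ and $h(t_2-(1-v_1)x)$ are compositions of $h$ with nonconstant affine maps (nonconstant since $v_1,1-v_1\in(0,1)$), and such compositions preserve convexity, strict convexity, concavity and strict concavity. Since $v_2,1-v_2>0$, the positive combination of these two functions preserves all four properties, and the additive constant $-(1-v_2)h(t_1)-v_2 h(t_2)$ does not affect them.

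The only subtle point I anticipate is the symmetry reduction to $x>0$: one must verify that after swapping $(s_1,s_2)$ and $(t_1,t_2)$, the new outer pair indeed lies in $\sI^2_{<}$ and the relabeled parameter belongs to $(0, t_2'-t_1']$. Both facts follow from the identity $v_1(s_2-t_1)=(1-v_1)(t_1-s_1)$, which is equivalent to the first linear equation.
\end{pf}
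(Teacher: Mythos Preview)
Your argument is correct and follows essentially the same route as the paper's proof: parametrize the $\rK_{v_1}$-level line through $(t_1,t_2)$ by $x$, reduce the second equation to $H(x,t_1,t_2)=0$, and use the swap symmetry $(s_1,s_2)\leftrightarrow(t_1,t_2)$ to restrict to $x\in(0,t_2-t_1]$ with the outer pair in $\sI^2_{<}$. The paper organizes this as an explicit chain of equivalent statements with two nesting cases, but the content is the same. One small slip: the identity you quote at the end should read $v_1(s_2-t_2)=(1-v_1)(t_1-s_1)$, not $v_1(s_2-t_1)$.
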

\begin{proof}
The following statements are equivalent:
\begin{enumerate}[noitemsep, label =(\alph*)]
    \item Condition~\ref{adm2} \textit{does not} hold;

    \item  There are $s_1\le s_2$ and $t_1\le t_2$ such that  $\rK_{v_1}(s_1, s_2) = \rK_{v_1}(t_1, t_2),$ and $\rK_{v_2}(h(s_1), h(s_2)) = \rK_{v_2}(h(t_1), h(t_2)),$ and  ($s_1 \neq t_1$ or $s_2 \neq t_2$);

    \item There are $s_1\le s_2$ and $t_1\le t_2$ such that  $\rK_{v_1}(s_1, s_2) = \rK_{v_1}(t_1, t_2),$ and $\rK_{v_2}(h(s_1), h(s_2)) = \rK_{v_2}(h(t_1), h(t_2)),$ and ((i) $t_1<s_1$ and $s_2 < t_2,$ or (ii) $s_1 < t_1$ and $t_2 < s_2$).
\end{enumerate}
The equivalence of (b) and (c) is due to the fact that, the condition $\rK_{v_1}(s_1, s_2) = \rK_{v_1}(t_1, t_2)$ (which is the same as  $(1-v_1)(s_1-t_1) = v_1(t_2-s_2)$) can hold in one of the following cases: (i) $t_1 < s_1 \le s_2 < t_2$ and (ii)  $s_1 <t_1 \le t_2 < s_2$.

For case (i), we set $x=(s_1-t_1)/v_1$ in point (c), where $0<x\le t_2-t_1,$ 
so $s_1=t_1+v_1x$ and $s_2=t_2-(1-v_1)x,$ as  $\rK_{v_1}(s_1, s_2) = \rK_{v_1}(t_1, t_2).$
For case (ii), we put $y=(t_1-s_1)/v_1$ in point (c) with $0 < y \le s_2-s_1$, so $t_1=s_1+v_1 y$ and $t_2=s_2-(1-v_1)y.$
Condition~\ref{adm2} \textit{does not} hold if and only if there exist $(s_1,s_2),(t_1,t_2)\in \sI^2_{<}$, $x\in (0,t_2-t_1],$ and $y\in (0,s_2-s_1]$ such that
$H(x,t_1,t_2)=0$ or $H(y,s_1,s_2)=0,$ as desired.
\end{proof}

Applying Lemma \ref{lem:wr}, we  identify  functions $h$ for which condition~\ref{adm2} holds.

\begin{tw}\label{tw:main}
Let $h\colon \mathsf{I} \to \mR$ 
and $v_1,v_2 \in (0,1)$.

\begin{enumerate}[noitemsep, label=(\alph*)] 
    \item 
    Assume that $v_1 = v_2$ and $h$ is a~continuous function. 
    Condition~\ref{adm2} is satisfied if and only if $h$ is strictly convex or strictly concave. 
\end{enumerate}
Condition~\ref{adm2} also holds for $h(x) = ax + b$ with $a,b\in \mR$ if and only if $a\neq 0$ and $v_1 \neq v_2$. 
Moreover, condition~\ref{adm2} is true
if one of the following conditions is satisfied: 
\begin{enumerate}[noitemsep, label=(\alph*), start = 2]
    \item  $v_1 < v_2,$ and  $h$ or $-h$ is a~convex and strictly increasing function;
    
    \item  $v_1 > v_2$,  and $h$ or $-h$ is a~convex and strictly decreasing function.
\end{enumerate}
\end{tw}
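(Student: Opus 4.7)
The plan is to invoke Lemma \ref{lem:wr}, which reduces \ref{adm2} to showing that the function $H(\cdot, t_1, t_2)$ from \eqref{ap:n1} has no zero on $(0, t_2 - t_1]$ for every $(t_1, t_2) \in \sI^2_<$. The linear case then falls out by direct substitution: for $h(x) = ax + b$, \eqref{ap:n1} collapses to $H(x, t_1, t_2) = a x (v_1 - v_2)$, which is nonzero on $(0, t_2 - t_1]$ precisely when $a \neq 0$ and $v_1 \neq v_2$.

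For part (a), assume $v_1 = v_2 = v$ and $h$ continuous. For the ``if'' direction, suppose $h$ is strictly convex; then Lemma \ref{lem:wr} makes $H$ strictly convex in $x$, while $H(0, t_1, t_2) = 0$ and $H(t_2 - t_1, t_1, t_2) = h((1-v) t_1 + v t_2) - (1-v) h(t_1) - v h(t_2) < 0$. A strictly convex function with these two values lies strictly below the chord on the open interval, giving $H < 0$ on $(0, t_2 - t_1]$; the strictly concave case is symmetric. For the converse I argue by contrapositive. Set $\phi(t_1, t_2) := H(t_2 - t_1, t_1, t_2)$ and note that $\phi < 0$ on $\{t_1 < t_2\}$ is equivalent to strict convexity of $h$: one direction is immediate, while the other combines Jensen's continuous midconvexity argument with the classical fact that a convex function attaining equality in the convexity inequality at an interior point must be affine on the corresponding subinterval. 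Therefore, if $h$ is neither strictly convex nor strictly concave, $\phi$ takes a value $\ge 0$ and a value $\le 0$ on the path-connected open set $\{t_1 < t_2\}$, and the intermediate value theorem yields $\phi(t_1^*, t_2^*) = 0$ for some $t_1^* < t_2^*$; choosing $x = t_2^* - t_1^*$ violates \ref{adm2}.

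For parts (b) and (c), I would derive a uniform upper bound on $H$ from convexity of $h$. Writing $\theta = x/(t_2 - t_1) \in [0, 1]$, the points $t_1 + v_1 x$ and $t_2 - (1 - v_1) x$ are convex combinations of $t_1$ and $t_2$, so convexity of $h$ gives
\begin{align*}
h(t_1 + v_1 x) - h(t_1) &\le v_1 \theta \, (h(t_2) - h(t_1)),\\
h(t_2 - (1 - v_1) x) - h(t_2) &\le -(1 - v_1) \theta \, (h(t_2) - h(t_1)).
\end{align*}
Substituting into \eqref{ap:n1} and simplifying yields $H(x, t_1, t_2) \le \theta (h(t_2) - h(t_1)) (v_1 - v_2)$. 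In case (b), $h$ convex and strictly increasing give $h(t_2) - h(t_1) > 0$ and $v_1 - v_2 < 0$, hence $H < 0$; in case (c), $h$ convex and strictly decreasing flip the signs of both factors, keeping the product negative. The ``$-h$'' alternatives follow by applying the same bound with $-h$ in place of $h$, which negates $H$ while preserving nonvanishing.

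The main obstacle I expect is the equivalence used in the ``only if'' direction of part (a), between strict $v$-midconvexity for a single $v \in (0, 1)$ (under continuity) and strict convexity of $h$. This relies on classical but non-trivial convex-analytic facts; once established, the IVT step is straightforward.
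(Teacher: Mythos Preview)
Your proposal is correct and follows essentially the same route as the paper: reduction via Lemma~\ref{lem:wr}, direct computation for the affine case, strict convexity/concavity of $H(\cdot,t_1,t_2)$ for the ``if'' in (a), sign analysis of the endpoint value $U(t_1,t_2)=H(t_2-t_1,t_1,t_2)$ together with the intermediate value theorem for the ``only if'', and a convexity bound for (b)--(c). For the obstacle you flag---that strict $v$-midconvexity plus continuity forces strict convexity---the paper handles it via the Dar\'oczy--P\'ales identity (reducing to strict $1/2$-midconvexity); your direct estimate $H(x,t_1,t_2)\le \theta\,(h(t_2)-h(t_1))(v_1-v_2)$ in (b)--(c) is a minor variant of the paper's two-step argument (first show $U<0$ via the decomposition $U=[h((1-v_1)t_1+v_1t_2)-(1-v_1)h(t_1)-v_1h(t_2)]+(v_2-v_1)(h(t_1)-h(t_2))$, then propagate by convexity of $H$).
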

\begin{proof} 
Let $H$ be defined as in \eqref{ap:n1}.
To simplify notation, we set $U(t_1, t_2) = H(t_2 - t_1, t_1, t_2)$ for any $(t_1,t_2)\in \sI^2_{<}$. 
We first assume that $v = v_1 = v_2$ and  the function $h$ is strictly convex or strictly concave. Clearly, 
\begin{align}\label{ap:n2}
    U(t_1,t_2) = h((1-v)t_1 + v t_2) - (1-v)h(t_1) - vh(t_2),\qquad (t_1,t_2) \in\sI^2_{<}.
\end{align}
For each $(t_1,t_2) \in\sI^2_{<}$,
we have 
$U(t_1, t_2) <0$
(resp., $U(t_1, t_2) >0$) if $h$ is strictly convex (resp., strictly concave). 
Combining the definition of strict convexity (resp., strict concavity)  of 
$H(\cdot, t_1, t_2)$ (see Lemma~\ref{lem:wr}) with  $H(0, t_1, t_2) = 0$, we get 
\begin{align}\label{eq:convex}
H(x, t_1, t_2) \le  \frac{U(t_2- t_1)}{t_2-t_1} x  < 0 \quad \Big(\text{resp., } H(x, t_1, t_2)  \ge \frac{U(t_2- t_1)}{t_2-t_1} x >0\Big)
\end{align}
for any  $x\in (0,t_2-t_1]$. 
Hence, 
$H(x, t_1, t_2)\neq 0$ for any $x\in (0, t_2-t_1]$ and any $(t_1,t_2) \in \sI^2_{<}$ and, by Lemma~\ref{lem:wr},
condition~\ref{adm2} holds true.

We now show the converse of the implication stated in point (a). 
Let $v=v_1 = v_2$.
Assume that condition~\ref{adm2} holds. 
From Lemma~\ref{lem:wr}, 
$U(t_1, t_2)\neq 0$ for any $(t_1,t_2) \in\sI^2_{<}$.
We consider three cases.
\begin{enumerate}[noitemsep, label = (a\arabic*)]
    \item Let 
    $U(t_1, t_2) <0$
    for any $(t_1,t_2) \in\sI^2_{<}$. By \eqref{ap:n2}, $h((1-v)t_1 + v t_2) < (1-v)h(t_1) + vh(t_2)$ for any $(t_1,t_2) \in\sI^2_{<}$. 
    The Dar\'{o}czy-P\'{a}les identity
    $$
    \frac{t_1 + t_2}{2} = v\Big(v \frac{t_1 + t_2}{2} + (1-v)t_1\Big) + (1-v)\Big(vt_2 + (1-v)\frac{t_1 + t_2}{2}\Big),
    $$
    can be used to show that $h$ is  strictly mid-convex, that is,   $h(0.5(t_1+t_2)) < 0.5h(t_1) + 0.5h(t_2)$ 
    (cf.~\cite[Lemma~1]{daroczy1987}). 
    This condition is equivalent to the strict convexity of $h$ (see \cite[Sec.~1.1]{niculescu2018}).

    \item If 
    $U(t_1, t_2) > 0$
    for any $(t_1,t_2) \in\sI^2_{<}$, then, by 
    reasoning analogous to that in point (a1), $h$ is strictly concave.

    \item Assume that 
    $U(t_1, t_2) > 0$
    and 
    $U(t_1^{\ast}, t_2^{\ast}) <0$ for some $(t_1, t_2),(t_1^{\ast},t_2^{\ast})\in \sI^2_{<}$.
    Without loss of generality, we assume that $t_1 \le  t_1^\ast.$ If $t_1 <  t_1^\ast,$ we put 
    $$
    y(x)=\frac{t_2^\ast-t_2}{t_1^\ast-t_1}(x - t_1) + t_2\quad \text{and}\quad g(x)=h((1-v)x+vy(x))-(1-v)h(x)-vh(y(x)),
    $$
    where $x\in [t_1, t_1^{\ast}]$ and $g$  is continuous.
    Then, by \eqref{ap:n2},
    $g(t_1) = U(t_1, t_2)>0$ and 
    $g(t_1^\ast) = U(t_1^{\ast}, t_2^{\ast})<0$, 
    so by the intermediate value theorem 
    there exists 
    $\overline{t}\in (t_1, t_1^\ast)$ such that $g(\overline{t})=0$.
    Since $t_1<y(t_1),$ $t_1^\ast<y(t_1^\ast)$, and $\overline {t}=\alpha t_1+(1-\alpha)t_1^\ast$ for some $\alpha\in (0,1)$, we have $\overline{t}<\alpha  y(t_1)+(1-\alpha)y(t_1^\ast)=y(\overline{t})$. 
    This implies that 
    $U(\overline{t}, y(\overline{t})) = g(\overline{t})= 0$, 
    as $(\overline{t}, y(\overline{t})) \in \sI^2_{<}$, which gives  a~contradiction.
    
    Now, assume that $t_1 = t_1^{\ast}$. Put 
    \begin{align*}
        y(x)=(t_2^\ast-t_2)x+t_2\quad\text{and} \quad g(x)=h((1-v)t_1+vy(x))-(1-v)h(t_1)-vh(y(x)),
    \end{align*}
    where $x\in [0,1]$. 
    Then, by \eqref{ap:n2}, 
    $g(0) = U(t_1, t_2)>0$ 
    and 
    $g(1) = U(t_1^{\ast}, t_2^{\ast}) < 0$, 
    so the intermediate value theorem implies that $g(\overline{t})=0$ for some $\overline{t}\in (0,1).$
    Clearly, $y(\overline{t})$ belongs to the open interval with endpoints $y(0)=t_2$ and $y(1)=t_2^\ast.$  
    Since $t_1 = t_1^{\ast} < \min\{t_2,t_2^\ast\}< y(\overline{t})$, we have $(t_1, y(\overline{t})) \in\sI^2_{<}$ and 
    $U(t_1, y(\overline{t})) = g(\overline{t}) =0$,   a~contradiction.  
    Point (a) has been established.
\end{enumerate}

For $h(x)=ax+b$ for any $x\in \sI$, we have $H(x,t_1,t_2) = a(v_1 -v_2)x$  for any $x,t_1,t_2.$
Consequently, condition~\ref{adm2} is satisfied if and only if $a\neq 0$ and $v_1 \neq v_2$.

Statements (b) and (c) follow from the formula 
\begin{align*}
    U(t_1, t_2) =h((1-v_1)t_1 +v_1 t_2) - (1-v_1)h(t_1)-v_1 h(t_2) + (v_2-v_1)(h(t_1) - h(t_2)).
\end{align*} 
For example, if $v_1 <v_2$ and $h$ is convex and strictly increasing (resp., $h$ is concave and strictly decreasing), then $U(t_1, t_2) < 0$ (resp., $U(t_1, t_2)>0$) for any $(t_1,t_2)\in \sI^2_{<}$.
Since $H(0, t_1, t_2)=0$, by convexity of $H(\cdot, t_1, t_2)$ (resp., concavity of $H(\cdot, t_1, t_2)$) for any $(t_1, t_2) \in \sI^2_{<}$,  we get \eqref{eq:convex}
for any  $x\in (0,t_2-t_1]$.
Hence, 
$H(x, t_1, t_2) < 0$ (resp., $H(x, t_1, t_2) > 0$) for all $x\in (0, t_2 - t_1]$ and   $(t_1, t_2) \in \sI^2_{<}$.
Using Lemma~\ref{lem:wr}, we get the statement. 
\end{proof}

\medskip

In what follows, we show
that the converse of the implication in point (b) of Theorem~\ref{tw:main} does not hold, that is, for some $v_1<v_2$ there exists  a~strictly concave and not strictly decreasing function $h\colon \sI\to \mR$ such that condition~\ref{adm2} holds.

\begin{example}
Let  $\sI = [a,b]\subset \mR$, $a<b$, and $h\colon \sI \to \mR$ be a~strictly concave
and differentiable function 
(with finite one-sided derivatives at the endpoints of $\sI$) 
and $h'(b)>0$. 
Then the function $[0, t_2 - t_1] \ni x \mapsto H(x, t_1, t_2)$ is strictly concave for any $(t_1,t_2)\in \sI^2_{<}$ (cf.~Lemma~\ref{lem:wr}). 
Thus, the Stolz Theorem (cf. \cite[Sec.~1.4.1]{niculescu2018}) yields
$$
H(x,t_1, t_2)\le H(0,t_1,t_2) + x\cdot \frac{\partial }{\partial y} H(y, t_1, t_2)|_{y=0}
$$
for all $0<x \le t_2-t_1.$
Since $H(0, t_1, t_2)=0$, by Lemma~\ref{lem:wr}, 
condition~\ref{adm2} holds 
 if 
${\tx \frac{\partial }{\partial y} H(y, t_1, t_2)|_{y=0} < 0}$ 
for any $(t_1,t_2) \in \sI^2_{<}$, which can be rewritten as 
\begin{align}\label{ex:eq1}
    h'(t_1)<\frac{(1-v_1)v_2}{(1-v_2)v_1}h'(t_2) \qquad \text{ for any } (t_1,t_2) \in \sI^2_{<}.
\end{align}
The function $h'$ is strictly decreasing  on $\sI$, 
so 
condition \eqref{ex:eq1} is equivalent to
\begin{align}\label{ex:eq2}
    h'(a) < \frac{(1-v_1)v_2}{(1-v_2)v_1}h'(b).   
\end{align}
As  $h'(b)>0$, it is always possible to choose $v_1,v_2$  such that $v_1 < v_2$ and the fraction in \eqref{ex:eq2} becomes arbitrarily large, ensuring that inequality \eqref{ex:eq2} will be satisfied.
For instance, for $\sI = [1,2]$, $v_2 = 1-v_1 = 0.6$, and increasing and strictly concave function 
$h(x) = \sqrt{x}$, condition \eqref{ex:eq2} is valid, as  $0.5=h'(1)< 2.25\, h'(2)\approx 0.7955$.
\end{example}

We now restrict our attention to strictly monotone functions, under which the following characterization holds.
\begin{proposition}\label{pro:3.5}
Let $h\colon \sI \to \mR$.
\begin{enumerate}[noitemsep, label=(\alph*)]
    \item If $h$ is strictly increasing, then condition \ref{adm2} holds for any $v_1,v_2\in (0,1)$ such that $v_1 < v_2$ (resp., $v_1 > v_2$) if and only if $h$ is convex (resp., concave).

    \item If $h$ is strictly decreasing, then  condition \ref{adm2} holds for any $v_1,v_2\in (0,1)$ such that $v_1 < v_2$ (resp., $v_1 > v_2$) if and only if $h$ is concave (resp., convex).
\end{enumerate}    
\end{proposition}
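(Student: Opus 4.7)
The plan is to dispatch the four sub-statements (strictly increasing vs.\ strictly decreasing, combined with $v_1 < v_2$ vs.\ $v_1 > v_2$) uniformly, using Theorem~\ref{tw:main} for the ``if'' direction and the auxiliary function
$$
U(t_1, t_2) = H(t_2 - t_1, t_1, t_2) = h((1-v_1)t_1 + v_1 t_2) - (1-v_2)h(t_1) - v_2 h(t_2)
$$
derived in its proof. The key point is that $U$ is affine, hence continuous, in the parameter $v_2$, irrespective of any regularity of $h$.

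For the \emph{if} direction, each case reduces to a direct application of Theorem~\ref{tw:main}(b) or (c) by choosing the correct representative from the pair $\{h, -h\}$. For instance, ``$h$ strictly increasing and convex'' with $v_1 < v_2$ falls under (b); ``$h$ strictly increasing and concave'' with $v_1 > v_2$ falls under (c) applied to $-h$ (which is then convex and strictly decreasing); the two strictly decreasing sub-cases are handled symmetrically.

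For the \emph{only if} direction, I would argue contrapositively in each case. Take part (a) with $v_1 < v_2$: assume condition~\ref{adm2} holds for all such pairs but $h$ is not convex. Then there exist $t_1 < t_2$ in $\sI$ and $\lambda \in (0,1)$ with $h(\lambda t_1 + (1-\lambda) t_2) > \lambda h(t_1) + (1-\lambda) h(t_2)$. Fixing $v_1 := 1-\lambda$, the witness gives $U(t_1, t_2) > 0$ at $v_2 = v_1$, while at $v_2 = 1$ we have $U(t_1, t_2) = h((1-v_1)t_1 + v_1 t_2) - h(t_2) < 0$ by strict monotonicity of $h$ (since $(1-v_1)t_1 + v_1 t_2 < t_2$). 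The intermediate value theorem then produces some $v_2^* \in (v_1, 1)$ with $U(t_1, t_2) = 0$, equivalently $H(t_2 - t_1, t_1, t_2) = 0$ at the pair $(v_1, v_2^*)$ with $v_1 < v_2^*$, contradicting condition~\ref{adm2} via Lemma~\ref{lem:wr}. The other three cases are analogous: in each, one extracts $(t_1, t_2, \lambda)$ from the failure of the required convexity or concavity, sets $v_1 := 1-\lambda$, evaluates $U$ at $v_2 = v_1$ (using the witness inequality) and at an endpoint $v_2 \in \{0, 1\}$ (using strict monotonicity of $h$), and applies IVT to land $v_2^*$ strictly in $(0, v_1)$ or $(v_1, 1)$, as needed.

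I expect the principal obstacle to be purely combinatorial bookkeeping: matching the four sign configurations of $U$ at $v_2 = v_1$ and at $v_2 \in \{0, 1\}$ against the four required positions of $v_2^*$ relative to $v_1$. No subtle analytic issue arises, because $U$ is linear in $v_2$ and its sign at the chosen endpoint is forced by strict monotonicity of $h$ alone, independently of any further regularity of $h$.
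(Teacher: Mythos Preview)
Your proposal is correct and follows essentially the same approach as the paper: the ``if'' direction is obtained directly from Theorem~\ref{tw:main}\,(b)--(c), and the ``only if'' direction proceeds by contrapositive, using the intermediate value theorem applied to the affine map $v_2 \mapsto U(t_1,t_2)$ (equivalently, the paper's $F(x) = -U(t_1,t_2)|_{v_2=x}$), with strict monotonicity of $h$ supplying the required sign at the endpoint $v_2\in\{0,1\}$. The paper's presentation differs only cosmetically, introducing the auxiliary function $F$ in place of varying $v_2$ in $U$.
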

\begin{proof}
Let $h$ be a~strictly increasing function.
\begin{enumerate}[noitemsep, label = (a\arabic*)]
    \item From Theorem~\ref{tw:main}\,(b) we see that, for any $v_1 <v_2$, the convexity of $h$ implies that condition~\ref{adm2} holds. 
     This can also be stated as: the convexity of $h$ implies  that condition~\ref{adm2} holds for any $v_1<v_2$.

    \item 
     Assume that  condition~\ref{adm2} is true for any $v_1 < v_2$.
    Suppose that $h$ is not convex. Then $F(v_1) < 0$ for some  $(t_1, t_2)\in \sI^2_<$ and $v_1 \in (0,1)$, where 
    $$
    F(x)=(1-x)h(t_1) + xh(t_2)-h((1-v_1)t_1+v_1t_2), \quad x\in [v_1,1].
    $$ 
    Since $h$ is strictly increasing, we have $F(1)=h(t_2)-h((1-v_1)t_1+v_1t_2) > 0$.  
    By the intermediate value theorem, there exists $v_2\in (v_1, 1)$ such that  $F(v_2)=0$.
    This implies that $H(t_2-t_1, t_1, t_2)=F(v_2)= 0$, where $H$ is defined in \eqref{ap:n1}. By Lemma~\ref{lem:wr} we get, a~contradiction. 
    Thus, $h$ is convex.
\end{enumerate}
Combining the results  from (a1) and (a2), we establish point (a) for  $v_1 < v_2$. 
A~similar argument applies for $v_1 > v_2$, with Theorem~\ref{tw:main}\,(c) instead of Theorem~\ref{tw:main}\,(b).
Finally, point (b) follows from point (a) applied to the function $-h.$
\end{proof}

\section{Admissibility of pairs of aggregation functions}
\label{sec:main}

Based on the results obtained in Section~\ref{sec:aux}, 
 we now turn to the main contributions of this paper,
namely, the investigation of the admissibility of certain classes of AFs. These include quasi-linear means, Archimedean $t$-norms, Archimedean $t$-conorms, as well as certain strictly Schur-convex or Schur-concave functions.

\subsection{\textbf{Quasi-linear means}}\label{sec:quasi}

For a~fixed $w\in (0,1)$ and a~strictly monotone continuous function $f\colon[0,1]\to\moR$,  
the \textit{quasi-linear} (or \textit{weighted quasi-arithmetic}) \textit{mean  generated by $f$}
is the function $\rK^f_w\colon \sLi \to [0,1]$ defined by 
$\rK_{w}^{f}(\bz)= f^{-1}((1-w)f(z_1) + wf(z_2))$ under the convention that $-\infty +\infty=\infty -\infty=-\infty$, where $f^{-1}$ is continuous on 
$\operatorname{Ran}(f)$, the range of $f$.

In the case of pairs of AFs consisting of quasi-linear means, the values of their generating functions at the endpoints of the domain $[0,1]$ play a~crucial role in determining their admissibility.

\begin{tw}\label{tw:quasi_not} 
$(\rK_{w_1}^{f}, \rK_{w_2}^{g}) \notin \adm$ if $|f(0)|\wedge |g(0)|=\infty$ or $|f(1)|\wedge |g(1)|=\infty$.
\end{tw}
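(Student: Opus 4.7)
The plan is to disprove admissibility directly by exhibiting, in each of the two cases, two distinct intervals on which both quasi-linear means coincide, thereby violating condition~\ref{Adm}.

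First I would treat the case $|f(0)|\wedge |g(0)|=\infty$, i.e.\ $f(0),g(0)\in\{-\infty,+\infty\}$. Pick any $0<u_2<x_2<1$ and set $\bu=[0,u_2]$, $\bx=[0,x_2]$; these are distinct elements of $\sLi$. Since $f$ is strictly monotone and continuous from $[0,1]$ to $\moR$, it takes finite values on $(0,1)$, so $f(u_2)$ and $f(x_2)$ are finite. Using the convention $-\infty+\infty=\infty-\infty=-\infty$, the sum $(1-w_1)f(0)+w_1 f(u_2)$ equals $f(0)$ itself (the sign of the infinity being preserved when added to anything finite, or collapsed to $-\infty$ by the convention otherwise), and the same is true if $u_2$ is replaced by $x_2$. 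Because $f$ is strictly monotone with $f(0)=\pm\infty$, the preimage $f^{-1}(f(0))$ equals $0$ (this is where $f$ attains its infinite boundary value, regardless of whether $f$ is increasing with $f(0)=-\infty$ or decreasing with $f(0)=+\infty$). Hence
\[
\rK_{w_1}^{f}(\bu)=0=\rK_{w_1}^{f}(\bx),
\]
and the analogous argument for $g$ yields $\rK_{w_2}^{g}(\bu)=0=\rK_{w_2}^{g}(\bx)$. Since $\bu\neq\bx$, condition~\ref{Adm} fails, so the pair is not admissible by Proposition~\ref{prop:adm}.

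The case $|f(1)|\wedge |g(1)|=\infty$ is symmetric: take $0<u_1<x_1<1$ and set $\bu=[u_1,1]$, $\bx=[x_1,1]$. The sums $(1-w_1)f(u_1)+w_1 f(1)$ and $(1-w_1)f(x_1)+w_1 f(1)$ both reduce to $f(1)=\pm\infty$, and $f^{-1}(f(1))=1$, so $\rK_{w_1}^{f}(\bu)=1=\rK_{w_1}^{f}(\bx)$ and likewise for $\rK_{w_2}^{g}$; again~\ref{Adm} fails.

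No substantial obstacle is expected here; the only point requiring slight care is the bookkeeping of the infinities under the stated convention and the identification $f^{-1}(\pm\infty)\in\{0,1\}$ depending on the direction of monotonicity. Both reduce to short case splits on the sign of $f(0)$ (resp.\ $f(1)$) and on whether $f$ is increasing or decreasing, and the same for $g$.
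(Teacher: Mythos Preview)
Your argument is correct and follows essentially the same approach as the paper: you exhibit two distinct intervals of the form $[0,u_2]$ and $[0,x_2]$ (respectively $[u_1,1]$ and $[x_1,1]$) on which both quasi-linear means collapse to $0$ (respectively $1$), violating condition~\ref{Adm}. Your treatment is in fact more explicit than the paper's about why the weighted sum reduces to the infinite endpoint value and why $f^{-1}(\pm\infty)\in\{0,1\}$; note, though, that the convention $-\infty+\infty=-\infty$ is never actually invoked here, since the second summand is always finite.
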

\begin{proof}
Due to the monotonicity of the generator function, 
$\rK_{w_1}^{f}(0,x)=0=\rK_{w_2}^{g}(0,x)$ for $x\in (0,1)$ if (i) $|f(0)|\wedge |g(0)|=\infty$ and 
$\rK_{w_1}^{f}(x,1) = 1 = \rK_{w_2}^{g}(x,1)$ for $x\in (0,1)$ if (ii) $|f(1)|\wedge |g(1)|=\infty.$ 
For $\bu  = [0,u_2]$ and $\bx = [0,x_2]$ with $0 < u_2 < x_2<1$ in case (i) and for $\bu  = [u_1, 1]$ and $\bx = [x_1,1]$  with $0<u_1<x_1<1$ in case (ii), we get $\rK_{w_1}^{f}(\bu)=\rK_{w_1}^{f}(\bx)$ and $\rK_{w_2}^{g}(\bu)=\rK_{w_2}^{g}(\bx)$. 
\end{proof}

\begin{remark}
Theorem \ref{tw:quasi_not} follows indirectly from \cite[Prop.~2.9]{gupta2023}, as the assumption $|f(0)|\wedge |g(0)|=\infty$ or $|f(1)|\wedge |g(1)|=\infty$ ensures that both 
$\rK_{w_1}^{f}$ and $\rK_{w_2}^{g}$ belong to the conjunctive or the disjunctive class.\footnote{An AF $\rA$ is said to belong to (i) \textit{conjunctive class} if $\rA(0,x)=0$ for all $x$, (ii) \textit{disjunctive class} if $\rA(x,1) = 1$ for all $x$ (cf.~\cite[Def.~2.8]{gupta2023}).}
\end{remark}

When the assumptions of Theorem~\ref{tw:quasi_not} fail to hold, further conditions are required to ensure that a~pair of quasi-linear means is admissible.

\begin{tw}\label{tw:quasi}
Let $\rK_{w_1}^{f}$ and $\rK_{w_2}^{g}$ 
be quasi-linear means with $|f(0)|\wedge|g(0)|<\infty$ and $|f(1)|\wedge|g(1)|<\infty$.
Set $\widehat{f}=f|_{(0,1)}$, $\widehat{g}=g|_{(0,1)}$, and $\widehat{h}=\widehat{g}\circ \widehat{f}^{-1}$.
Then 
\begin{enumerate}[noitemsep, label = (\alph*)]
    \item 
    $(\rK_{w_1}^{f}, \rK_{w_2}^{g}) \in \adm$ with $w_1=w_2$ 
    if and only if $\widehat{h}$  is strictly convex or strictly  concave.
\end{enumerate}
Moreover, $(\rK_{w_1}^{f}, \rK_{w_2}^{g}) \in \adm$ with $w_1\neq w_2$ if one of the following conditions is satisfied: 
\begin{enumerate}[noitemsep, label = (\alph*), start = 2]
    \item when $\widehat{f}$ is strictly increasing and
    \begin{enumerate}[noitemsep, label = (b\arabic*)]
    \item 
    $\widehat{h}$ or $-\widehat{h}$ is convex and strictly increasing  if $w_1 < w_2$,  or

    \item 
    $\widehat{h}$ or $-\widehat{h}$ is convex  and strictly decreasing  if $w_1 > w_2$; 
\end{enumerate}

    \item when $\widehat{f}$ is strictly decreasing and
    \begin{enumerate}[noitemsep, label = (c\arabic*)]
    \item 
    $\widehat{h}$ or $-\widehat{h}$ is convex  and strictly decreasing  if $w_1 < w_2$,  or 

    \item 
    $\widehat{h}$ or $-\widehat{h}$ is convex  and strictly increasing if $w_1 > w_2$. 
\end{enumerate}
\end{enumerate}
\end{tw}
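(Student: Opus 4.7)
The plan is to apply Proposition~\ref{prop:adm} and reduce condition~\ref{Adm} for the pair $(\rK_{w_1}^f, \rK_{w_2}^g)$ to condition~\ref{adm2} for $\widehat{h}=\widehat{g}\circ\widehat{f}^{-1}$ over $\sI=\operatorname{Ran}(\widehat{f})$, and then invoke Theorem~\ref{tw:main} to read off each of (a), (b1), (b2), (c1), (c2).

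For $\bu,\bx\in\sLi$ with all four endpoints in $(0,1)$, I would use the substitution
\[
 (s_1,s_2)=\bigl(\widehat{f}(u_1),\widehat{f}(u_2)\bigr),\qquad (t_1,t_2)=\bigl(\widehat{f}(x_1),\widehat{f}(x_2)\bigr)
\]
when $\widehat{f}$ is strictly increasing, and the same with the order reversed when $\widehat{f}$ is strictly decreasing. Since $f$ and $g$ are injective on $(0,1)$, the equalities $\rK_{w_1}^f(\bu)=\rK_{w_1}^f(\bx)$ and $\rK_{w_2}^g(\bu)=\rK_{w_2}^g(\bx)$ are equivalent, respectively, to $\rK_{v_1}(s_1,s_2)=\rK_{v_1}(t_1,t_2)$ and $\rK_{v_2}(\widehat{h}(s_1),\widehat{h}(s_2))=\rK_{v_2}(\widehat{h}(t_1),\widehat{h}(t_2))$, with $v_i=w_i$ in the increasing case and $v_i=1-w_i$ in the decreasing case (the complementation merely records the reversal of the weighted sum caused by reversing the order of the $s_i$). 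Moreover, $\bu=\bx$ holds if and only if $(s_1,s_2)=(t_1,t_2)$, so on the interior the problem becomes exactly~\ref{adm2} for $\widehat{h}$ on $\sI$ with parameters $(v_1,v_2)$.

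Next, I would dispose of endpoints on $\{0,1\}$ using the hypotheses $|f(0)|\wedge|g(0)|<\infty$ and $|f(1)|\wedge|g(1)|<\infty$. If $f(0)=-\infty$, then $g(0)$ is finite; the convention $-\infty+\infty=-\infty$ gives $\rK_{w_1}^f([0,u_2])=0$ for every $u_2\in[0,1]$, whereas $\rK_{w_1}^f(\bx)>0$ whenever $x_1>0$. Hence $\rK_{w_1}^f(\bu)=\rK_{w_1}^f(\bx)$ with $u_1=0$ forces $x_1=0$, after which $\rK_{w_2}^g(\bu)=\rK_{w_2}^g(\bx)$ together with the finiteness of $g(0)$ and the strict monotonicity of $g$ immediately yields $u_2=x_2$. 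If instead $f(0)$ is finite, the substitution above extends continuously to $u_1=0$. Symmetric arguments take care of the endpoint~$1$ and of the cases in which the roles of $f$ and $g$ are exchanged, so the interior reduction is enough.

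Finally, the reduced condition is settled by Section~\ref{sec:aux}. Part~(a) follows directly from Theorem~\ref{tw:main}(a) applied to the continuous strictly monotone function $\widehat{h}$ (which gives the claimed iff in terms of strict convexity or strict concavity). For $w_1\neq w_2$, cases (b1) and (c2) both yield $v_1<v_2$ and invoke Theorem~\ref{tw:main}(b), while cases (b2) and (c1) both yield $v_1>v_2$ and invoke Theorem~\ref{tw:main}(c); the hypotheses on $\widehat{h}$ or $-\widehat{h}$ in the statement translate verbatim to the hypotheses of the corresponding part of Theorem~\ref{tw:main}. The delicate point I expect to require care is the boundary bookkeeping, namely verifying uniformly across every combination of finite/infinite boundary values of $f$ and $g$ permitted by the hypotheses that coincident means of a pair force equal intervals; the interior reduction itself is a clean change of variables.
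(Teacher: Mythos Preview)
Your approach is the paper's: split $\sLi$ into interior and boundary, reduce the interior via the substitution $s_i=\widehat f(u_i)$ (reversed when $\widehat f$ is decreasing) to condition~\ref{adm2} on $\sI=\operatorname{Ran}(\widehat f)$ with $(v_1,v_2)=(w_1,w_2)$ or $(1-w_1,1-w_2)$, and invoke Theorem~\ref{tw:main}. Your matching of (a),(b1),(b2),(c1),(c2) to Theorem~\ref{tw:main}(a),(b),(c) is correct.

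The only place your sketch is looser than the paper is exactly the boundary bookkeeping you flag. Two specific points. First, your sentence ``If instead $f(0)$ is finite, the substitution above extends continuously to $u_1=0$'' fails when $|g(0)|=\infty$: then $\widehat h$ blows up at the endpoint $f(0)$ and you cannot absorb $u_1=0$ into~\ref{adm2}. The fix is precisely the role-swap you allude to (use the $\rK^{g}_{w_2}$-equation to force $x_1=0$, then the finite $\rK^{f}_{w_1}$-equation to get $u_2=x_2$), but it should be stated as an alternative, not as a continuity extension. Second, the mixed configuration $\bu=[0,u_2]$, $\bx=[x_1,1]$ is not covered by treating the endpoints $0$ and $1$ ``symmetrically'' one at a time; the paper handles it explicitly (its case~(c), with subcases according to which of $|f(0)|,|f(1)|,|g(0)|,|g(1)|$ are finite). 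No new idea is needed there, but the casework has to be done.
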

\begin{proof}
Put $\sJ_1 = \{\bz\in \sLi \mid 0<z_1\le z_2<1\}$ and $\sJ_2 = \{\bz\in \sLi \mid z_1=0\text{ or }z_2=1\}$. 
Clearly, $\sJ_1\cup \sJ_2 = \sLi$ and $\sJ_1 \cap \sJ_2 = \varnothing$.
To prove our assertions, we  need to verify
conditions~\ref{J1} and \ref{J2}, defined as follows:
\begin{enumerate}[noitemsep, label = (J\arabic*)]
    \item \label{J1}  for any $\bu,\bx \in \sJ_1$     
    \begin{align}\label{im:2}
        \Big(\rK_{w_1}^{f}(\bu) = \rK_{w_1}^{f}(\bx) \;\text{ and }\; \rK_{w_2}^{g}(\bu) = \rK_{w_2}^{g}(\bx)\Big) \quad \Rightarrow \quad \bu = \bx, 
    \end{align}

    \item \label{J2} implication \eqref{im:2} is true for any $\bu,\bx\in \sJ_2$.
\end{enumerate}

Obviously, $\widehat{h}\colon \operatorname{Ran}(\widehat{f}) \to \mR$ and $\operatorname{Ran}(\widehat{f})$ is an open interval  in $\mR$ due to continuity and strict monotonicity of $\widehat{f}$.
As $\widehat{f}$ and $\widehat{g}$ are both strictly monotone, to show \ref{J1}, we need to prove that  for any  $[s_1, s_2],[t_1, t_2] \in
\{\bz \in \sLi \mid  z_1,z_2\in\operatorname{Ran}(\widehat{f})  \}$, 
\begin{align}\label{im:3a}
\begin{cases}
    (1-v_1) s_1 + v_1s_2 = (1-v_1) t_1 + v_1t_2\\
      (1-v_2) \widehat{h}(s_1) + v_2\widehat{h}(s_2) = (1-v_2) \widehat{h}(t_1) + v_2\widehat{h}(t_2)   
\end{cases}\quad \Rightarrow\quad [s_1,s_2] = [t_1,t_2],  
\end{align} 
where $v_i=w_i$  if $\widehat{f}$ is strictly increasing, and $v_i= 1-w_i$ if $\widehat{f}$ is strictly decreasing, where $i\in \{1,2\}$.
It follows from  Theorem~\ref{tw:main} and  points (b)-(c) that condition \ref{adm2} holds, so implication \ref{J1} is established.
   
We now show \ref{J2}. Let $\bu,\bx \in \sJ_2$. 
Assume that
\begin{align}\label{eq:former}
\rK_{w_1}^{f}(\bu) = \rK_{w_1}^{f}(\bx)\quad  \text{and} \quad \rK_{w_2}^{g}(\bu) = \rK_{w_2}^{g}(\bx).
\end{align}

Due to  the definition of  $\sJ_2$, we 
need to examine few cases. 
\begin{enumerate}[noitemsep, label = (\alph*)]
   \item  Let $\bu= [0, u_2]$ and $\bx = [0, x_2]$ with $u_2,x_2 \in[0,1]$.
   As $|f(0)|\wedge |g(0)|<\infty$,  from  \eqref{eq:former},   we obtain $u_2=x_2$. 
 
    \item  Put $\bu = [u_1, 1]$ and $\bx = [x_1, 1]$ with $u_1,x_1\in[0,1]$.
    Due to $|f(1)| \wedge |g(1)|<\infty$, by  \eqref{eq:former}, we get $u_1=x_1$.

    \item  Set $\bu= [0, u_2]$ and $\bx = [x_1, 1]$ with $u_2,x_1\in [0,1]$. Consider two possibilities:
    \begin{enumerate}[noitemsep, label=(c\arabic*)]
        \item \label{C1}
        $|f(0)|\vee |f(1)| <\infty$. By~\cite[Thm.~4.17]{grabisch2009}, the function $\rK_{w_1}^{f}$ is strictly increasing on  $\sLi$, so the condition $\rK_{w_1}^{f}(0,u_2) = \rK_{w_1}^{f}(x_1,1)$ implies $x_1=0$ and $u_2=1$.
        In consequence, \eqref{eq:former} yields $\bu = \bx$. 
        The similar reasoning applies if $|g(0)|\vee |g(1)|<\infty$. 
        
        \item $|f(0)| \vee |g(1)|<\infty$. Observe that, if $|f(1)|<\infty$ or $|g(0)|<\infty$, then we get case \ref{C1}. 
        Thus, assume $|f(1)|\wedge |g(0)|=\infty$.
        Then condition~\eqref{eq:former}
        takes the form 
        \begin{align}
        \rK_{w_1}^{f}(0,u_2) = 1    \quad  \text{and} \quad 0= \rK_{w_2}^{g}(x_1,1).
        \end{align}
        Since the functions $u\mapsto \rK_{w_1}^{f}(0,u)$ and $x\mapsto \rK_{w_2}^{g}(x,1)$ are strictly increasing, we conclude that $u_2=1$ and $x_1=0$.
        Therefore, $u_1 = x_1$ and $u_2=x_2$.
        The case $|g(0)|\vee |f(1)|<\infty$ proceeds in a~similar manner. 
    \end{enumerate}
    The case $\bu = [u_1, 1]$ and $\bx = [0, x_2]$ with $u_1, x_2\in[0,1]$ follows by a~similar argument, so we omit it.
\end{enumerate}
\end{proof}

\begin{corollary}\label{cor:full}
Let $\rK_{w_1}^{f}$ and $\rK_{w_2}^{g}$ 
be quasi-linear means. 
Set $\widehat{f}=f|_{(0,1)}$, $\widehat{g}=g|_{(0,1)}$, and $\widehat{h}=\widehat{g}\circ \widehat{f}^{-1}$.
Then 
$(\rK_{w_1}^{f}, \rK_{w_2}^{g}) \in \adm$ with $w_1=w_2$ if and only if 
$|f(0)|\wedge|g(0)|<\infty$, $|f(1)|\wedge |g(1)|<\infty$, 
and $\widehat{h}$  is strictly convex or strictly  concave.  
\end{corollary}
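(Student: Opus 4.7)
The plan is to derive Corollary \ref{cor:full} as a direct synthesis of Theorem \ref{tw:quasi_not} and Theorem \ref{tw:quasi}\,(a), handling the two implications separately and noting that no new technical work is required beyond invoking these two results.

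For the ($\Leftarrow$) direction, I would simply apply Theorem \ref{tw:quasi}\,(a): the assumptions $|f(0)|\wedge|g(0)|<\infty$ and $|f(1)|\wedge |g(1)|<\infty$ are exactly the standing hypotheses of that theorem, and the strict convexity or strict concavity of $\widehat{h}=\widehat{g}\circ\widehat{f}^{-1}$ is the additional condition that Theorem \ref{tw:quasi}\,(a) identifies (in the $w_1=w_2$ case) as sufficient for $(\rK_{w_1}^{f}, \rK_{w_2}^{g}) \in \adm$.

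For the ($\Rightarrow$) direction, suppose $(\rK_{w_1}^{f}, \rK_{w_2}^{g}) \in \adm$ with $w_1=w_2$. Taking the contrapositive of Theorem \ref{tw:quasi_not}, admissibility rules out both $|f(0)|\wedge|g(0)|=\infty$ and $|f(1)|\wedge|g(1)|=\infty$, so we immediately obtain the two finiteness conditions $|f(0)|\wedge|g(0)|<\infty$ and $|f(1)|\wedge|g(1)|<\infty$. With these in hand, the hypotheses of Theorem \ref{tw:quasi}\,(a) are in force, and its ``only if'' implication forces $\widehat{h}$ to be strictly convex or strictly concave. This closes the loop.

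I do not expect any obstacle: the corollary is essentially a packaging result that shows the finiteness hypotheses in Theorem \ref{tw:quasi} are not only sufficient but also necessary (via Theorem \ref{tw:quasi_not}), yielding a clean full characterization in the case $w_1=w_2$. The only minor care needed is to make explicit the contrapositive reading of Theorem \ref{tw:quasi_not}, i.e., that membership in $\adm$ implies both $|f(0)|\wedge|g(0)|<\infty$ and $|f(1)|\wedge|g(1)|<\infty$ simultaneously, since Theorem \ref{tw:quasi_not} is stated as a disjunction of sufficient conditions for non-admissibility.
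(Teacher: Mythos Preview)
Your proposal is correct and follows exactly the same approach as the paper's own proof, which simply cites Theorem~\ref{tw:quasi}\,(a) for the ($\Leftarrow$) direction and the combination of Theorems~\ref{tw:quasi_not} and~\ref{tw:quasi}\,(a) for the ($\Rightarrow$) direction. Your explicit handling of the contrapositive of Theorem~\ref{tw:quasi_not} (turning the disjunctive sufficient condition for non-admissibility into the conjunctive necessary finiteness conditions) is precisely the one small step the paper leaves implicit.
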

\begin{proof}
The implication ``$\Leftarrow$'' follows from Theorem~\ref{tw:quasi}, (a), while the converse implication is a~consequence of Theorems~\ref{tw:quasi_not} and~\ref{tw:quasi}\,(a) 
\end{proof}

Due to the complexity of conditions (b)–(c) in Theorem~\ref{tw:quasi}, 
we include Table~\ref{tab:1} to help make their verification easier.
\begin{table}[h!]
\centering
\begin{tabular}{ |c|c|c| }
\hline
$w_1$ vs  $w_2$ 
&  $f|_{(0,1)}$ 
& $g|_{(0,1)} \circ (f|_{(0,1)})^{-1}$ \\
\hline
\multirow{4}{*}{$w_1 < w_2$} 
 & \multirow{2}{*}{str. increasing}  & convex and str. increasing \\ \cline{3-3}
 &  & concave and str. decreasing \\ \cline{2-3}
 & \multirow{2}{*}{str. decreasing}  & convex and str. decreasing \\\cline{3-3}
 &  & concave and str. increasing \\
\hline
\multirow{4}{*}{$w_1 > w_2$} 
 & \multirow{2}{*}{str. increasing} & convex and str. decreasing \\\cline{3-3}
 &  & concave and str. increasing \\\cline{2-3}
 & \multirow{2}{*}{str. decreasing} & convex and str. increasing \\\cline{3-3}
 &  & concave and str. decreasing \\
\hline
\end{tabular}
\caption{
Each row represents a~different configuration of assumptions on  $f|_{(0,1)}$ and $g|_{(0,1)}\circ (f|_{(0,1)})^{-1}$, which, by Theorem~\ref{tw:quasi}\,(b)-(c), determine the admissibility of 
$(\rK^f_{w_1}, \rK^{g}_{w_2})$ with a~fixed $w_1\neq w_2$ if  $|f(0)|\wedge |g(0)|<\infty$ or $|f(1)|\wedge |g(1)|<\infty$. 
Abbreviation ``str.'' stands for ``strictly''.}
\label{tab:1}
\end{table}

We provide several special cases derived from the obtained results  (Theorems \ref{tw:quasi_not} and \ref{tw:quasi}, and Corollary~\ref{cor:full})  
for some quasi-linear means with fixed weights.

\begin{example}\label{ex:wrmp}  
A~weighted root-mean-power generated by the function
$f(x) = x^{\gamma}$ with $\gamma \in \mathbb{R} \setminus \{0\}$ is defined as
\begin{align}\label{QL1}
    \mathbf{WRM}_w^{\gamma}(\bu) = ((1-w)u_1^{\gamma} + w u_2^{\gamma})^{1/\gamma}.\tag{QL1}
\end{align}
Theorem \ref{tw:quasi_not} yields 
$(\mathbf{WRM}_{w_1}^{\alpha}, \mathbf{WRM}_{w_2}^{\beta})\not\in \adm$
for any $w_1,w_2$ and $\alpha,\beta<0$.
Next, from Corollary \ref{cor:full}, 
we conclude that  $(\mathbf{WRM}_{w_1}^{\alpha}, \mathbf{WRM}_{w_2}^{\beta}) \in \adm$ with $w_1 = w_2$ for any 
$\alpha\neq \beta$ such that at least one of them is positive.
Using Table~\ref{tab:1}, we get $(\mathbf{WRM}_{w_1}^{\alpha}, \mathbf{WRM}_{w_2}^{\beta}) \in \adm$ with:
\begin{enumerate}[noitemsep, label=(\alph*)]
    \item 
     $w_1<w_2$ if $\alpha< 0 <\beta$ or $0 < \alpha \le \beta$; 
    
    \item 
    $w_1 > w_2$ if $\beta < 0 < \alpha$ or $0 < \beta \le \alpha$. 
\end{enumerate}
\end{example}

\begin{example}\label{ex:wem}
Consider a~pair of weighted exponential means. 
Recall that a~weighted exponential mean is a~quasi-linear mean generated by the function $f(x) = e^{\gamma x}$ with $\gamma \in \mathbb{R} \setminus \{0\}$ and has the form
\begin{align}\label{QL2}
    \mathbf{WEM}_w^{\gamma}(\bu) = \gamma^{-1}\log((1-w)e^{\gamma u_1}+we^{\gamma u_2}). \tag{QL2}
\end{align}
Since the generating function takes only finite values, 
the admissibility of  a~pair of weighted exponential means follows from Corollary~\ref{cor:full},
which states that $(\mathbf{WEM}_{w_1}^{\alpha}, \mathbf{WEM}_{w_2}^{\beta}) \in \adm$ with $w_1 = w_2$
for any distinct $\alpha,\beta$. 
Furthermore, by Table~\ref{tab:1}, $(\mathbf{WEM}_{w_1}^{\alpha}, \mathbf{WEM}_{w_2}^{\beta}) \in \adm$ with $w_1 < w_2$ if 
$\alpha \le \beta$, and  with $w_1 > w_2$ if  $\alpha \ge \beta$.
\end{example}

\begin{example}\label{ex:wgm}  We take a~weighted geometric mean generated by   $f(x) = \log x$, 
\begin{align}\label{QL3}
    \mathbf{WGM}_w(\bu) = u_1^{w}u_2^{1-w} \tag{QL3}  
\end{align}
and  a~quasi-linear mean generated by  $f(x)=\log (x(1-x)^{-1}),$ 
\begin{align}\label{QL4}
    \mathbf{WM}_w(\bu)
    =\frac{u_1^{1-w}u_2^{w}}{u_1^{1-w}u_2^{w}+(1-u_1)^{1-w}(1-u_2)^{w}}\tag{QL4}
\end{align}
 under the convention that $0/0 = 0$ (see~\cite[Ex.~4.7]{grabisch2009}).
Theorem~\ref{tw:quasi_not} gives 
$(\mathbf{WGM}_{w_1},\mathbf{WGM}_{w_2}) \notin \adm$ and $(\mathbf{WM}_{w_1}, \mathbf{WM}_{w_2}) \notin\adm$ for any $w_1,w_2$. 
\end{example}

We now turn to the analysis of admissibility for pairs of AFs derived from combining cases \eqref{QL1}--\eqref{QL4}.

\begin{example}\label{ex:3.6}
From Theorem~\ref{tw:quasi_not}, Corollary~\ref{cor:full}, and Table~\ref{tab:1}, it follows that:
\begin{enumerate}[noitemsep, label=(\alph*)]
    \item $(\mathbf{WGM}_{w_1}, \mathbf{WEM}_{w_2}^{\alpha})\in\adm$ with: 
    \begin{enumerate}[noitemsep, label=(a\arabic*)]
        \item $w_1 = w_2$ if and only if $\alpha \in [-1,\infty)\setminus\{0\}$;
        \item $w_1 < w_2$ if $\alpha\in [-1,\infty)\setminus\{0\}$; 
    \end{enumerate}

    \item $(\mathbf{WM}_{w_1}, \mathbf{WEM}_{w_2}^{\alpha})\notin \adm$ with $w_1 = w_2$ for any $\alpha$, as  $\widehat{h}(x)=\exp(\alpha e^x/(1+e^x))$, $x\in \mR$, is neither strictly convex nor strictly concave for any $\alpha \neq 0$; 

    \item  $(\mathbf{WM}_{w_1}, \mathbf{WRM}_{w_2}^{\alpha})\not\in\adm$ with $w_1 = w_2$ for any $\alpha$,  or with $w_1\neq w_2$ for $\alpha <0$, as  $\widehat{h}(x) = (e^{x} / (1+ e^{x}))^{\alpha}$, $x\in \mR$, is neither (strictly) convex  nor (strictly) concave for any $\alpha \neq 0$; 
   
    \item $(\mathbf{WM}_{w_1}, \mathbf{WGM}_{w_2})
    \notin \adm$ for any $w_1,w_2$; 

    \item  $(\mathbf{WRM}_{w_1}^{\alpha}, \mathbf{WGM}_{w_2})\in\adm$ with $w_1 \ge w_2$ for $\alpha>0$, and $(\mathbf{WRM}_{w_1}^{\alpha}, \mathbf{WGM}_{w_2})\not\in\adm$ for any $w_1, w_2$ and $\alpha<0$; 

    \item 
     $(\mathbf{WRM}_{w_1}^{\alpha}, \mathbf{WEM}_{w_2}^{\beta}) \in \adm$ with: 
    \begin{enumerate}[noitemsep, label=(f\arabic*)]
        \item    $w_1 = w_2$ if and only if
        $(\alpha,\beta) \in \{(a,b)  \in (\mR\setminus \{0\})^2 \mid a< 1 \text{ and } b> a-1\} \cup \{(a,b)\in (\mR\setminus\{0\})^2\mid a> 1 \text{ and } b < a-1\}$; 

        \item $w_1 < w_2$ if 
         $(\alpha,\beta)\in \{(a,b)\in (\mR\setminus\{0\})^2\mid a\le 1 \text{ and } b\ge a-1\}$; 
         
        \item $w_1 > w_2$ if $(\alpha,\beta)\in \{(a,b)\in (\mR\setminus\{0\})^2\mid a\ge 1 \text{ and } b\le a-1\}$.
    \end{enumerate}
\end{enumerate}
\end{example}


\medskip

We conclude this section with the observation that the converse implications in Theorem~\ref{tw:quasi}\,(b)-(c) do not hold (see Section~\ref{sec:aux}). However, combining Proposition~\ref{pro:3.5} with the argument from the proof of Theorem~\ref{tw:quasi} yields the following result.

\begin{tw}\label{tw:quasi2}
Let $\rK_{w_1}^{f}$ and $\rK_{w_2}^{g}$ 
be quasi-linear means with $|f(0)|\wedge|g(0)|<\infty$ and $|f(1)|\wedge|g(1)|<\infty$.
Set $\widehat{f}=f|_{(0,1)}$, $\widehat{g}=g|_{(0,1)}$, and $\widehat{h} = \widehat{g}\circ \widehat{f}^{-1}$. 
\begin{enumerate}[noitemsep, label = (\alph*)]
    \item Suppose that  both $\widehat{f}$ and $\widehat{h}$ are strictly increasing or strictly decreasing. Then  
    $(\rK_{w_1}^{f}, \rK_{w_2}^{g}) \in \adm$ for any $w_1 < w_2$ (resp., $w_1 > w_2$) if and only if $\widehat{h}$ is convex (resp., concave).

    \item 
    Assume that  $\widehat{f}$ and $\widehat{h}$ are strictly monotone with the opposite monotonicity. Then  $(\rK_{w_1}^{f}, \rK_{w_2}^{g}) \in \adm$ for any $w_1 < w_2$ (resp., $w_1 > w_2$) if and only if $\widehat{h}$ is concave (resp., convex).
\end{enumerate}
\end{tw}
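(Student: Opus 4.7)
The plan is to combine the machinery from the proof of Theorem~\ref{tw:quasi} with Proposition~\ref{pro:3.5}, in order to upgrade the one-directional sufficient conditions of Theorem~\ref{tw:quasi}\,(b)--(c) into a full equivalence under a \emph{universal} quantifier over weight pairs.

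First, I would re-use the decomposition $\sLi = \sJ_1 \cup \sJ_2$ introduced in the proof of Theorem~\ref{tw:quasi}, reducing condition~\ref{Adm} for $(\rK^{f}_{w_1}, \rK^{g}_{w_2})$ to implications~\ref{J1} and~\ref{J2}. The verification of~\ref{J2} carried out in that proof uses only the finiteness assumptions $|f(0)|\wedge|g(0)|<\infty$ and $|f(1)|\wedge|g(1)|<\infty$ and does not depend on any relation between $w_1$ and $w_2$, so it transfers verbatim and leaves only~\ref{J1} to deal with. Next, I would substitute $s_i = \widehat{f}(u_i)$ and $t_i = \widehat{f}(x_i)$ (swapping the endpoints when $\widehat{f}$ is strictly decreasing, exactly as in the proof of Theorem~\ref{tw:quasi}) to turn~\ref{J1} into condition~\ref{adm2} for the function $\widehat{h}$ with parameters $v_1, v_2$, where $v_i = w_i$ if $\widehat{f}$ is strictly increasing and $v_i = 1 - w_i$ if $\widehat{f}$ is strictly decreasing.

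The crucial feature of this substitution is that the map $w \mapsto v$ preserves order when $\widehat{f}$ is increasing and reverses it when $\widehat{f}$ is decreasing. Hence ``\ref{adm2} for all $w_1 < w_2$'' corresponds to ``\ref{adm2} for all $v_1 < v_2$'' in the first case and to ``\ref{adm2} for all $v_1 > v_2$'' in the second, with the analogous statement for $w_1 > w_2$. Applying Proposition~\ref{pro:3.5} to the strictly monotone function $\widehat{h}$ then gives a convexity/concavity characterization in each of the four sub-cases (two per part of the theorem). For example, in part~(a) with both $\widehat{f}$ and $\widehat{h}$ strictly decreasing, $w_1 < w_2$ becomes $v_1 > v_2$, and Proposition~\ref{pro:3.5}\,(b) applied to the strictly decreasing $\widehat{h}$ tells us that~\ref{adm2} holds for all $v_1 > v_2$ if and only if $\widehat{h}$ is convex---exactly matching (a). The remaining three sub-cases are entirely parallel and merge pairwise into statements (a) and (b) because the two possible order reversals (one from $\widehat{f}$, one from $\widehat{h}$) either cancel or reinforce each other in the expected way.

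The main source of error I anticipate is purely bookkeeping: keeping track of which of the four combinations of monotonicities corresponds to $v_1 < v_2$ versus $v_1 > v_2$, and which branch of Proposition~\ref{pro:3.5} then yields convexity versus concavity. I would draft a small auxiliary $2 \times 2$ table (in the spirit of Table~\ref{tab:1}) before writing out the proof, so as to make sure the four cases collapse correctly into the two stated equivalences.
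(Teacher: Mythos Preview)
Your proposal is correct and is precisely the argument the paper has in mind: the paper does not spell out a proof of Theorem~\ref{tw:quasi2} but simply states that it follows by ``combining Proposition~\ref{pro:3.5} with the argument from the proof of Theorem~\ref{tw:quasi}'', which is exactly your plan. Your anticipated bookkeeping (the $2\times 2$ table of monotonicity cases governing the passage $w_i\leftrightarrow v_i$ and the choice of branch in Proposition~\ref{pro:3.5}) is the only content beyond what is already written in those two earlier results, and your sample case check is accurate.
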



\subsection{\textbf{Archimedean $t$-norms}}\label{sec:arch}

A~$t$-norm
$\rT$ is called \textit{Archimedean} if it is continuous and $\rT(x,x)<x$ for any $x\in (0,1)$~\cite{klement2000, nguyen2006}.
When $\rT$ is an Archimedean $t$-norm, its restriction $\rT|_{\sLi}$  is an AF.
Archimedean $t$-norms are characterized by their additive generator $\rt\colon [0,1] \to [0, \infty]$ via
$\rT(a,b) =\rt^{-1}( (\rt(a)+\rt(b))\wedge \rt(0))$
for all $a,b$, where $\rt$ is a~strictly decreasing and continuous function such that $\rt(1)=0$. 
Depending on $\rt(0)$, $\rT$ is  strict ($\rt(0)=\infty$) or nilpotent ($\rt(0)<\infty$)~\cite[Prop.~3.29]{klement2000}.

A~$t$-conorm
$\rS$ is called \textit{Archimedean} if it is continuous and $\rS(x,x) > x$ for any $x\in (0,1)$~\cite{nguyen2006}.
Any Archimedean $t$-conorm can also be represented by its additive generator $\rs$, as
$\rS(a,b)=\rs^{-1}( (\rs(a)+\rs(b))\wedge \rs(1))$
for any $a,b$, where $\rs\colon [0,1] \to [0,\infty]$ is a~strictly increasing and continuous function  such that $\rs(0) = 0$.
If $\rs(1) = \infty$, then $\rS$ is a~strict Archimedean $t$-conorm while it is niplotent for $\rs(1) < \infty$.

In~\cite[Prop.~2.9]{gupta2023}, it was shown that $(\rT|_{\sLi}, \widehat{\rT}|_{\sLi}) \notin \adm$ and $(\rS|_{\sLi}, \widehat{\rS}|_{\sLi})\notin \adm$, where $\rT,\widehat{\rT}$ are Archimedean $t$-norms, and $\rS, \widehat{\rS}$ are Archimedean $t$-conorms.
Therefore, we will focus on examining the admissibility of pairs of the form $(\rT|_{\sLi}, \rS|_{\sLi})$.

\begin{tw}\label{tw:norm}
Let $\rT$ be a~strict Archimedean $t$-norm with the additive generator $\rt$ and $\rS$ be a~strict Archimedean $t$-conorm with the additive generator $\rs$. 
Set $\ot=\rt|_{(0,1)}$ and $\os=\rs|_{(0,1)}$. 
Then, $(\rT|_{\sLi}, \rS|_{\sLi})\in \adm$ if and only if 
$\os \circ \ot^{-1}$ is strictly convex or strictly concave.
\end{tw}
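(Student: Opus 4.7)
The plan is to verify condition~\ref{Adm} of Proposition~\ref{prop:adm} for the pair $(\rT|_{\sLi}, \rS|_{\sLi})$. Since $\rT$ is strict, $\rt(0) = \infty$ yields $\rT(0, y) = 0$ and $\rS(0, y) = y$ for every $y \in [0,1]$; dually, $\rs(1) = \infty$ gives $\rS(x, 1) = 1$ and $\rT(x, 1) = x$. I will split the argument into two parts: a boundary part, where at least one of the two intervals has $u_1 = 0$ or $u_2 = 1$, and an interior part, where both $\bu, \bx \in (0,1)^2$.

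The boundary part is settled by direct computation. If $u_1 = 0$ then $\rT(\bu) = 0$, while $x_1 > 0$ forces $\rt(x_1) + \rt(x_2) < \infty$ and hence $\rT(\bx) > 0$. So $\rT(\bu) = \rT(\bx)$ implies $x_1 = 0$, after which $\rS(\bu) = u_2 = x_2 = \rS(\bx)$ gives $\bu = \bx$. A dual argument using $\rS$ and $\rs(1) = \infty$ handles the case $u_2 = 1$. Mixed scenarios in which $\bu \in (0,1)^2$ and $\bx$ lies on the boundary are ruled out by the same inequalities, and the case $\bu = [0, u_2]$ versus $\bx = [x_1, 1]$ collapses to $\bu = \bx = [0, 1]$.

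The interior part is the substantive step. For $\bu, \bx \in (0,1)^2$, set
\begin{align*}
s_1 = \rt(u_2), \quad s_2 = \rt(u_1), \quad t_1 = \rt(x_2), \quad t_2 = \rt(x_1),
\end{align*}
so that $s_1 \le s_2$ and $t_1 \le t_2$, with all four values in $\operatorname{Ran}(\ot) = (0, \infty)$. Then $\rT(\bu) = \rT(\bx)$ reads $s_1 + s_2 = t_1 + t_2$, while $\rS(\bu) = \rS(\bx)$ reads $\widehat{h}(s_1) + \widehat{h}(s_2) = \widehat{h}(t_1) + \widehat{h}(t_2)$, which is exactly condition~\ref{adm2} with $v_1 = v_2 = 1/2$ applied to the continuous function $\widehat{h}$ on $\sI = (0, \infty)$. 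By Theorem~\ref{tw:main}\,(a), this condition holds if and only if $\widehat{h}$ is strictly convex or strictly concave. For the converse implication, any failure of strict convexity/concavity of $\widehat{h}$ produces via the same theorem distinct $(s_1, s_2), (t_1, t_2) \in \sI^2_{<}$ satisfying the two sum identities; transporting them back through $\ot^{-1}$ yields $\bu \neq \bx$ in $(0,1)^2$ that violate~\ref{Adm}. The main delicacy is the interior reduction to Theorem~\ref{tw:main}; the boundary enumeration is routine but must be carried out to rule out all mixed configurations.
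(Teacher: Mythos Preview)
Your proof is correct and follows essentially the same route as the paper. The paper compresses the argument by observing that $(\rT|_{\sLi},\rS|_{\sLi})$ and $(\rK_{0.5}^{\rt},\rK_{0.5}^{\rs})$ share the same level sets (since $\rt^{-1},\rs^{-1}$ are bijections of $[0,\infty]$ onto $[0,1]$), and then invokes Corollary~\ref{cor:full} with $f=\rt$, $g=\rs$; your boundary/interior split and reduction to Theorem~\ref{tw:main}\,(a) is exactly what the proof of Theorem~\ref{tw:quasi}/Corollary~\ref{cor:full} does, just written out in place rather than cited. One small slip: in the converse direction you write ``distinct $(s_1,s_2),(t_1,t_2)\in\sI^2_{<}$'', but the failure of \ref{adm2} only guarantees $s_1\le s_2$, $t_1\le t_2$ with $(s_1,s_2)\neq(t_1,t_2)$; this does not affect the argument, since the pullback through $\ot^{-1}$ still lands in $\sLi$.
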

\begin{proof}
Verifying the admissibility of the pair $(\rT|_{\sLi}, \rS|_{\sLi})$ is equivalent to determining the admissibility of the pair $(\rK_{0.5}^{\rt}, \rK_{0.5}^{\rs})$, since $\operatorname{Ran}(t) = \operatorname{Ran}(s) = [0, \infty]$. Therefore, the statement follows from Corollary~\ref{cor:full} with $w_1=w_2=0.5$, 
$f = \rt$, and $g=\rs$, as $\rt(0) \wedge \rs(0) = 0 = \rt(1) \wedge \rs(1)$.
\end{proof}

\begin{tw}\label{tw:norm_not}
$(\rT|_{\sLi}, \rS|_{\sLi})\notin \adm$ for any nilpotent Archimedean $t$-norm $\rT$ or any  nilpotent Archimedean $t$-conorm $\rS$.
\end{tw}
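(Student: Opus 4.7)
The plan is to violate condition~\ref{Adm} by exhibiting two distinct intervals on which both $\rT$ and $\rS$ take the same value. The key observation is that a nilpotent additive generator causes the associated function to be identically equal to a boundary value ($0$ for the $t$-norm, $1$ for the $t$-conorm) on a subset of $\sLi$ with non-empty interior; on such a two-dimensional constancy region the companion function takes values along one-dimensional level curves, each of which contains infinitely many points.

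Suppose first that $\rT$ is nilpotent, so $\rt(0)<\infty$. The set $Z=\{(a,b)\in\sLi\mid \rt(a)+\rt(b)\ge \rt(0)\}$ on which $\rT\equiv 0$ has non-empty interior in $\sLi$: by continuity of $\rt$, any point $(a_0,b_0)$ with $0<a_0<b_0$ both small satisfies $\rt(a_0)+\rt(b_0)\approx 2\rt(0)>\rt(0)$, and the same holds on a full neighborhood. I would fix such a point $(a_0,b_0)$ with $a_0,b_0$ also small enough that $\rs(a_0)+\rs(b_0)<\rs(1)$; this is automatic for strict $\rS$ and holds by continuity of $\rs$ at $0$ if $\rS$ is nilpotent. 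Then on a sufficiently small open neighborhood $U\subset Z$ of $(a_0,b_0)$ one has $\rT(a,b)=0$ and $\rS(a,b)=\rs^{-1}(\rs(a)+\rs(b))$.

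Next I would move along a level curve of $\rS$. For a small $\varepsilon>0$, set $a_1=a_0+\varepsilon$ and define $b_1=\rs^{-1}\bigl(\rs(a_0)+\rs(b_0)-\rs(a_1)\bigr)$; this is well-defined and close to $b_0$ by strict monotonicity and continuity of $\rs$, and $b_1<b_0$. For $\varepsilon$ small, $(a_1,b_1)\in U$ and $a_1<b_1$, so $\bu=[a_0,b_0]\neq [a_1,b_1]=\bx$ both lie in $\sLi$ and satisfy $\rT(\bu)=0=\rT(\bx)$ together with $\rS(\bu)=\rS(\bx)$, contradicting~\ref{Adm}.

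The nilpotent $\rS$ case is treated symmetrically: $\rs(1)<\infty$ yields $\rS\equiv 1$ on the set $\{(a,b)\in\sLi\mid \rs(a)+\rs(b)\ge \rs(1)\}$, which has non-empty interior near $(1,1)$. I would pick an interior point $(a_0,b_0)$ with $a_0,b_0$ close to $1$ and satisfying $\rt(a_0)+\rt(b_0)<\rt(0)$ (automatic for strict $\rT$, and by continuity otherwise since $\rt(1)=0$), then perform the analogous level-curve perturbation with $\rt$ in place of $\rs$. The main obstacle is essentially bookkeeping: one must ensure that the interior point of the constancy region of the nilpotent function is chosen so that the other function is given there by its non-saturated formula, which is why the perturbation is localized near $(0,0)$ in the first case and near $(1,1)$ in the second.
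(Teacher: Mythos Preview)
Your argument is correct and follows essentially the same route as the paper: exploit the two-dimensional saturation region of the nilpotent operator (near $0$ for $\rT$, near $1$ for $\rS$), ensure the companion function is in its non-saturated regime there, and then locate two distinct points on a common level curve of the companion. Your case split (nilpotent $\rT$ with arbitrary $\rS$, then nilpotent $\rS$ with arbitrary $\rT$) is slightly more economical than the paper's three cases, and your direct level-curve perturbation via $\rs^{-1}$ replaces the paper's intermediate-value-theorem construction, but the underlying idea is identical.
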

\begin{proof}
It is enough to indicate two distinct intervals $\bu, \bx \in \sLi$ for which the following system of equations holds: 
\begin{align}\label{arch:2}
\begin{cases} 
\rt^{-1}\big((\rt(u_1)+\rt(u_2)) \wedge \rt(0)\big) = \rt^{-1} \big((\rt(x_1)+\rt(x_2))\wedge \rt(0)\big),\\ \rs^{-1}\big((\rs(u_1)+\rs(u_2)) \wedge\rs(1)\big) = \rs^{-1}\big((\rs(x_1)+\rs(x_2))\wedge \rs(1) \big). 
\end{cases} 
\end{align}
Firstly, assume that $\rT$ is a~nilpotent Archimedean $t$-norm and $\rS$ is a~nilpotent Archimedean $t$-conorm. 
Put $D_{\rt} =\{a\in [0,1] \mid \rt(a) \ge 0.5\rt(0)\}$  and $D_{\rs} = \{a\in [0,1] \mid \rs(a) \le 0.5 \rs(1)\}$,
where $\rt$ and $\rs$ are the additive generators of $\rT$ and $\rS$, respectively.
Since $\rt$ is a~continuous and strictly decreasing function with $\rt(0)<\infty$, there exists $m_{\rt} \in (0,1)$ such that $D_{\rt} = [0, m_{\rt}]$.
Similarly, there exists $m_{\rs} \in (0,1)$ such that $D_{\rs} = [0, m_{\rs}]$. 
Set $m = m_{\rs} \wedge m_{\rt}>0$.
Note that 
\begin{align}\label{arch:0}
 \rt(z_1) + \rt(z_2) \ge \rt(0)\quad \text{and} \quad  \rs(z_1) + \rs(z_2) \le \rs(1) 
\end{align} 
for any $z_1,z_2 \in [0, m]$.
Due to~\eqref{arch:0}, it is enough to find two distinct $\bu, \bx \in \sLi$ fulfilling
\begin{align}\label{arch:1}
   u_2\vee x_2\le m\qquad \text{and} \qquad  \rs(u_1) + \rs(u_2) = \rs(x_1) + \rs(x_2),
\end{align}
as \eqref{arch:1} implies~\eqref{arch:2}.
Let $u = 0.5 m$,  $d_1 = \rs(u) - \rs(0) >0$, and $d_2 = \rs(m) - \rs(u)>0$. 
We  consider three cases.
\begin{enumerate}[noitemsep, label=(\alph*)]
    \item Let $d_1 < d_2$.
    Clearly, $\rs(u) < \rs(u) + d_1 < \rs(u) + d_2 = \rs(m)$. By the intermediate value theorem there exists $x_2\in (u,m)$ such that $\rs(x_2) = \rs(u)+ d_1$. Thus, $u\vee x_2 < m$ and $\rs(u) + \rs(u) = \rs(u) - d_1 + \rs(u) + d_1 = \rs(0) + \rs(x_2)$.
    In consequence, 
    condition \eqref{arch:1} with $\bu = [u,u]$ and $\bx = [0, x_2]$ is satisfied.
     
    \item Set $d_1 > d_2$. Hence $\rs(0) = \rs(u) - d_1 < \rs(u)- d_2 < \rs(u)$, the intermediate value theorem yields that there exists $x_1\in (0,u)$ such that $\rs(x_1) = \rs(u)- d_2$. 
    Hence, $\rs(u) + \rs(u) = \rs(u) - d_2 + \rs(u) + d_2 = \rs(x_1) + \rs(m)$. Therefore, the distinct intervals that satisfy \eqref{arch:1} are of the form $\bu = [u,u]$ and $\bx = [x_1, m]$.

    \item If $d_1 = d_2$, then intervals $\bu = [u,u]$ and $\bx = [0, m]$ meet condition \eqref{arch:1}.
\end{enumerate}
Assume now that $\rT$ is a~strict Archimedean $t$-norm and $\rS$ is a~nilpotent Archimedean $t$-conorm. Consider the set $D_{\rs} = \{a\in [0,1] \mid \rs(a) \ge 0.5\rs(1)\}$. 
Clearly, 
$D_{\rs} = [l_{\rs}, 1]$ for some $l_{\rs} \in (0,1)$.
For any $z_1,z_2 \in [l_s, 1]$, we have 
$\rt(z_1) + \rt(z_2) < \rt(0) = \infty$ and $\rs(z_1) + \rs(z_2) \ge \rs(1)$. 
Consequently, it is enough to find two distinct intervals $\bu, \bx \in \sLi$ satisfying
\begin{align}
   u_1\wedge x_1 \ge l_{\rs}\quad  \text{and} \quad \rt(u_1) + \rt(u_2) = \rt(x_1) + \rt(x_2). 
\end{align}
Put
$u=0.5 (l_{\rs}+1)$, $d_1 = \rt(u)-\rt(1)>0$, and $d_2 = \rt(l_{\rs})-\rt(u) > 0$. 
Following a~similar approach as in the first case discussed, one can construct such intervals.
When $\rT$ is a~nilpotent Archimedean $t$-norm and $\rS$ is a~strict Archimedean $t$-conorm, the proof is similar, so we omit it.
The proof is complete.
\end{proof}

Using Theorems \ref{tw:norm} and \ref{tw:norm_not},  we are able to  determine whether the pair $(\rT|_{\sLi}, \rS|_{\sLi})$ is admissible, where $\rT$ and $\rS$ are an Archimedean $t$-norm and an Archimedean $t$-conorm, respectively. This extends the results of~\cite[Rem.~2.11\,(ii)]{gupta2023}.

Since an Archimedean copula is a~special case of an Archimedean $t$-norm \cite[Thm.~2.2.9]{alsina2006}, all the above results can be applied to the pair $(\rC|_{\sLi}, \rC^{\ast}|_{\sLi})$, where $\rC$ is an Archimedean copula, and $\rC^{\ast}$ is an Archimedean co-copula, defined as $\rC^{\ast}(a,b) = 1 -\widehat{\rC}(1-a, 1-b)$ for some copula $\widehat{\rC}$.


\subsection{\textbf{Strictly Schur-convex functions}}\label{sec:schur}

Recall that a~function $\rF\colon \sLi\to \mR$ 
is called \textit{Schur-convex} (resp., \textit{Schur-concave}) if the inequality 
\begin{align}\label{sn:1}
     \rF(\bu) \le \rF(\bx)\quad (\text{resp., } \rF(\bu)\ge \rF(\bx)),
\end{align}
holds for all $\bu,\bx\in \sLi$ such that $u_1+u_2 = x_1 + x_2$ and $u_2\le x_2.$ 
The function $\rF$ is called \textit{strictly Schur-convex} (resp., \textit{strictly Schur-concave}) if inequality \eqref{sn:1} is strict whenever 
$u_1+u_2 = x_1 + x_2$ and $u_2<x_2$.
For example, the AF $\rA$ defined by $\rA(u_1, u_2) = 0.5(f(u_1) + f(u_2))$  with a~strictly convex  (resp., strictly concave) and increasing function $f\colon [0,1] \to [0,1]$ such that $f(0) = 1-f(1) =0$, 
is strictly Schur-convex (resp., strictly Schur-concave) \cite[p.\,92]{marshall2011}.
See also \cite[Sec. 4]{alsina2006} and \cite[Sec.~3]{marshall2011} for more details on Schur-convexity. 

\begin{tw}\label{tw:schur}
Let $\rA(u_1, u_2) = 0.5(f(u_1) + f(u_2))$ and $\rB(u_1, u_2) = 0.5(g(u_1) + g(u_2))$ for any $u_1, u_2 \in [0,1]$,  where both $f$ and $g$ are increasing bijections on $[0,1]$. 
Then $(\rA, \rB) \in \adm$ if and only if $g\circ f^{-1}$ is a~strictly convex or strictly concave function.
\end{tw}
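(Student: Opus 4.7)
My plan is to reduce the theorem to a direct application of Theorem~\ref{tw:main}\,(a) through a change of variables. First, I would note that $\rA$ and $\rB$ are indeed AFs: since $f$ is an increasing bijection on $[0,1]$, one has $f(0)=0$ and $f(1)=1$, so $\rA([0,0])=0$, $\rA([1,1])=1$, and monotonicity is inherited from $f$ (analogously for $\rB$). By Proposition~\ref{prop:adm}, the pair $(\rA,\rB)$ is admissible if and only if condition~\ref{Adm} is satisfied.

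Next, I would perform the substitution $s_i:=f(u_i)$ and $t_i:=f(x_i)$ for $i=1,2$. Because $f$ is a~strictly increasing bijection of $[0,1]$, this map induces a~bijection of $\sLi$ onto itself and preserves the inequalities $s_1\le s_2$ and $t_1\le t_2$, with $s_i,t_i\in[0,1]$. Setting $h:=g\circ f^{-1}$, the equalities $\rA(\bu)=\rA(\bx)$ and $\rB(\bu)=\rB(\bx)$ become, respectively,
\begin{align*}
\rK_{1/2}(s_1,s_2) &= \rK_{1/2}(t_1,t_2),\\
\rK_{1/2}(h(s_1),h(s_2)) &= \rK_{1/2}(h(t_1),h(t_2)),
\end{align*}
while the conclusion $\bu=\bx$ is equivalent to $(s_1,s_2)=(t_1,t_2)$. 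Consequently, condition~\ref{Adm} for the pair $(\rA,\rB)$ is precisely condition~\ref{adm2} for $h$ on $\sI=[0,1]$ with $v_1=v_2=1/2$.

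Finally, since $f$ and $g$ are continuous (any monotone bijection between intervals is continuous), so is $h$. Invoking Theorem~\ref{tw:main}\,(a) with $v_1=v_2=1/2$ and the continuous function $h$ yields that condition~\ref{adm2} is satisfied if and only if $h=g\circ f^{-1}$ is strictly convex or strictly concave, which together with the reduction above gives the claim.

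There is essentially no substantive obstacle; the heavy lifting has been done in Theorem~\ref{tw:main}\,(a), and the present argument is only a~relabeling via the bijection $f$. The only point requiring minor care is that the degenerate cases $u_1=u_2$ or $x_1=x_2$ are covered, which is automatic because condition~\ref{adm2} is stated with non-strict inequalities $s_1\le s_2$ and $t_1\le t_2$.
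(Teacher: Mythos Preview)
Your proposal is correct and follows essentially the same route as the paper's proof: both reduce condition~\ref{Adm} for $(\rA,\rB)$ to condition~\ref{adm2} with $\sI=[0,1]$, $h=g\circ f^{-1}$, and $v_1=v_2=0.5$ via the substitution $s_i=f(u_i)$, $t_i=f(x_i)$, and then invoke Theorem~\ref{tw:main}\,(a). Your version merely spells out a few routine verifications (that $\rA,\rB$ are AFs and that $h$ is continuous) that the paper leaves implicit.
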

\begin{proof}
Note that  condition~\ref{Adm} for  the pair $(\rA, \rB)$ is equivalent to condition~\ref{adm2} with $\sI = [0,1]$, $h = g \circ f^{-1}$, and $v_1 = v_2 = 0.5$. Therefore, the statement follows from Theorem~\ref{tw:main}\,(a).
\end{proof}

\section{\textbf{Coincidence 
between $\leqa_{\rA,\rB}$ and $\leqa_{(\alpha, \beta)}$}}\label{sec:coin}

Let $\leqa$ and $\unlhd$ be binary relations on $\sLi$. 
We say that $\leqa$ \textit{coincides with}  $\unlhd$ 
whenever $\leqa\,= \unlhd$, that is, for all $\bu, \bx \in \sLi$,  $\bu \leqa \bx$ if and only if  $\bu \unlhd \bx$. 
For instance,  if $\rB_1$ and $\rB_2$ are AFs such that the functions $[0,x_2]\ni x\mapsto \rB_1(x,x_2)$ and $[x_1, 1]\ni x\mapsto \rB_2(x_1,x)$ are strictly increasing for any fixed $x_1,x_2$, then  
$\leqa_{(1,0)} \,=\,\leqa_{\rK_1, \rB_1}$ and
$\leqa_{(0,1)}\,=\,\leqa_{\rK_0, \rB_2}$, as $(\rK_1, \rB_1), (\rK_0, \rB_2)\in \adm$ (see Example~\ref{ex:2.6}).
We now identify 
classes of AFs $\rB$ for which the coincidence between $\leqa_{\rK_{0.5}, \rB}$ and the $(0.5, \beta)$-order holds.

\begin{proposition}\label{pro:07}
Let  $(\rK_{0.5},\rB) \in \adm$. 
If $\leqa_{\rK_{0.5},\rB}$ coincides with $(0.5, 1)$-order (resp., $(0.5, 0)$-order), then  $\rB$ is Schur-convex  (resp., Schur-concave). 
Moreover, if $\rB$ is a~strictly Schur-convex (resp., strictly Schur-concave), then $\leqa_{\rK_{0.5}, \rB}$ coincides with $(0.5, 1)$-order (resp., $(0.5, 0)$-order).
\end{proposition}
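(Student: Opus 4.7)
The plan is to reduce both implications to the case $\rK_{0.5}(\bu) = \rK_{0.5}(\bx)$, i.e., $u_1 + u_2 = x_1 + x_2$, since off this equi-mean locus the two orders in question already agree through their common first aggregator $\rK_{0.5}$. I will present the argument for the Schur-convex/$(0.5, 1)$ case; the Schur-concave/$(0.5, 0)$ case follows by a parallel argument, with $u_2 \le x_2$ replaced by $u_1 \le x_1$ (which, under equal sums, is equivalent to $u_2 \ge x_2$) and the corresponding reversal of inequality for $\rB$.

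For the first implication, I fix $\bu, \bx \in \sLi$ with $u_1 + u_2 = x_1 + x_2$ and $u_2 \le x_2$. Then $\rK_{0.5}(\bu) = \rK_{0.5}(\bx)$ and $\rK_1(\bu) = u_2 \le x_2 = \rK_1(\bx)$, so by definition $\bu \leqa_{(0.5, 1)} \bx$. The assumed coincidence of orders forces $\bu \leqa_{\rK_{0.5}, \rB} \bx$, and the defining conditions of this relation combined with $\rK_{0.5}(\bu) = \rK_{0.5}(\bx)$ yield $\rB(\bu) \le \rB(\bx)$, which is precisely the Schur-convexity of $\rB$.

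For the converse, I invoke the elementary fact that two total orders on the same set coincide as soon as one is contained in the other. Both $\leqa_{(0.5, 1)}$ and $\leqa_{\rK_{0.5}, \rB}$ are total (the latter by the assumption $(\rK_{0.5}, \rB) \in \adm$ and Proposition~\ref{prop:adm}), so it suffices to verify the implication $\bu \leqa_{(0.5, 1)} \bx \Rightarrow \bu \leqa_{\rK_{0.5}, \rB} \bx$. If $\rK_{0.5}(\bu) < \rK_{0.5}(\bx)$, there is nothing to prove; otherwise $u_1 + u_2 = x_1 + x_2$ with $u_2 \le x_2$, and either $\bu = \bx$ (trivial) or $u_2 < x_2$, in which case strict Schur-convexity of $\rB$ delivers $\rB(\bu) < \rB(\bx)$, so $\bu \leqa_{\rK_{0.5}, \rB} \bx$ as required. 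The only delicate point is the containment-implies-equality step for total orders, which is a short argument using totality plus the antisymmetry provided by \ref{Adm}; beyond that, the proof is entirely direct and the strictness of Schur-convexity is needed solely to rule out ties on the equi-mean locus that would otherwise break admissibility.
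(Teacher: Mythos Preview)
Your proof is correct and follows essentially the same approach as the paper: both reduce the question to the equi-mean locus $u_1+u_2=x_1+x_2$ and use (strict) Schur-convexity there. The only presentational difference is in the converse implication: the paper shows directly that the two conditions ``$u_1+u_2=x_1+x_2$ and $u_2\le x_2$'' and ``$u_1+u_2=x_1+x_2$ and $\rB(\bu)\le\rB(\bx)$'' are equivalent (proving the backward direction by contrapositive via strict Schur-convexity), whereas you establish only one containment of the orders and then invoke the fact that a containment between total antisymmetric orders forces equality. Both routes are equally short and rely on the same core observation.
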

\begin{proof}  
The admissible order $\leqa_{\rK_{0.5},\rB}$ coincides with $(0.5, 1)$-order if and only if the statements $p$ and $q$ 
have the same logical value, where 
$p=\,$``$u_1+u_2=x_1+x_2$ and $u_2\le x_2$'' and  $q=\,$``$u_1+u_2=x_1+x_2$ and  $\rB(u_1,u_2)\le \rB(x_1,x_2)$''. 
 From the fact that $p$ implies $q$, it follows that $\rB$ is Schur-convex. 
 Further, if $\rB$ is strictly Schur-convex, then $q$ implies $p$.  
In fact, suppose that $u_2>x_2$. Then, by strict Schur-convexity, 
$\rB(x_1,x_2)<\rB(u_1,u_2),$ a~contradiction with $\rB(u_1,u_2)\le \rB(x_1,x_2)$. 
The proof for the $(0.5, 0)$-order is similar.
\end{proof}

Recall that $\leqa_{(\alpha,\beta)} \,= \,\leqa_{(\alpha, 1)}$ for $\alpha<\beta$ 
and $\leqa_{(\alpha,\beta)}\,=\,\leqa_{(\alpha,0)}$ for $\alpha>\beta$ (see \cite{bustince2013}), so from  Proposition~\ref{pro:07}, 
it follows that if $\leqa_{\rK_{0.5}, \rB}$ coincides with $(0.5, \beta)$-order, then $\rB$ is Schur-convex or Schur-concave function. 

Next, we consider the cases when  $\leqa_{\rA,\rB}\, \neq\, \leqa_{(\alpha,\beta)}$.  
We begin by presenting an example of  admissible order generated by a pair $(\rA,\rB) \in \adm$ that does not coincide with any $(\alpha, \beta)$-order.

\begin{example}\label{ex:5} 
Let $\rA(\bz) = 0.5 (z_1^2 + z_2^2)$ and $\rB(\bz) = 0.5(\sqrt{z_1} + \sqrt{z_2})$ for any $\bz\in \sLi$. 
By Theorem~\ref{tw:schur}~with $f(x) = x^2$ and $g(x) = \sqrt{x}$, $(\rA,\rB)\in\adm$.
For $\bu =  [0.36, 0.82]$ and $\bx = [0.08, 0.92]$, we get $0.401 = \rA(\bu) < \rA(\bx) = 0.4264$ and
\begin{align*}
    \rK_{\alpha}(\bu) > \rK_{\alpha}(\bx) \quad \Leftrightarrow\quad \alpha < \frac{14}{19} < 0.737.
\end{align*}
Thus $\bu \prec_{\rA,\rB} \bx$ and $\bx \prec_{(\alpha, \beta)} \bu$ for $\alpha < 14/19$.
For $\bu =  [0.27, 0.71] $ and $\bx =[0.57, 0.59]$, we obtain $0.2885  = \rA(\bu)  < \rA(\bx) = 0.3365$ and
\begin{align*}
    \rK_{\alpha}(\bu)  > \rK_{\alpha}(\bx) \quad \Leftrightarrow\quad \alpha > \frac{5}{7} > 0.714.
\end{align*}
Hence, $\bu \prec_{\rA,\rB} \bx$ and $\bx \prec_{(\alpha, \beta)} \bu$ for $\alpha > 5/7$.
In consequence, the relation $\leqa_{\rA,\rB}$ does not coincide with the $(\alpha, \beta)$-order for any $\alpha,\beta$.
\end{example}


We now present some  classes of AFs for which the relation generated by an admissible pair of AFs does not coincide with a~given $(\alpha, \beta)$-order.

\begin{proposition}\label{pro:5}
If $\alpha \le 0.5$ (resp., $\alpha\ge 0.5$) and $\beta\neq \alpha$, then  there exists a pair $(\rA,\rB)\in \adm$  with strictly Schur-convex (resp., strictly Schur-concave) function $\rA$  for which an admissible order $\leqa_{\rA,\rB}$ does not coincide with $(\alpha, \beta)$-order. 
\end{proposition}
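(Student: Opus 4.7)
The plan is to construct, for each of the two cases of the proposition, one explicit admissible pair $(\rA, \rB)$ via Theorem~\ref{tw:schur} and exhibit two intervals whose $\leqa_{\rA,\rB}$-comparison opposes the $\leqa_{(\alpha,\beta)}$-comparison. I will arrange things so that the disagreement is already visible at the first coordinate $\rK_\alpha$, making the choice of $\beta$ irrelevant — this covers all $\beta \ne \alpha$ at once.

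For the case $\alpha \le 0.5$, I would set $\rA(u_1, u_2) = (u_1^2 + u_2^2)/2$ and $\rB(u_1, u_2) = (\sqrt{u_1} + \sqrt{u_2})/2$, generated by $f(x) = x^2$ and $g(x) = \sqrt{x}$, both increasing bijections of $[0,1]$. Since $g \circ f^{-1}(y) = y^{1/4}$ is strictly concave, Theorem~\ref{tw:schur} gives $(\rA, \rB) \in \adm$; strict convexity of $f$ with $f(0) = 1 - f(1) = 0$ makes $\rA$ strictly Schur-convex (see the remark preceding Theorem~\ref{tw:schur}). I would then test $\bu = [1/2, 1/2]$ against $\bx = [1/100, 9/10]$: a direct calculation gives $\rA(\bu) = 0.25 < 0.40505 = \rA(\bx)$, hence $\bu \prec_{\rA, \rB} \bx$, while $\rK_\alpha(\bu) = 0.5 > 0.01 + 0.89\alpha = \rK_\alpha(\bx)$ for every $\alpha \le 0.5$, hence $\bx \prec_{(\alpha, \beta)} \bu$ regardless of $\beta$. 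For the case $\alpha \ge 0.5$, I would use the dual construction: swap the generators and set $\rA(u_1, u_2) = (\sqrt{u_1} + \sqrt{u_2})/2$ and $\rB(u_1, u_2) = (u_1^2 + u_2^2)/2$, so that $g \circ f^{-1}(y) = y^4$ is strictly convex (admissibility by Theorem~\ref{tw:schur}) and $\rA$ is strictly Schur-concave. Testing $\bu = [1/2, 1/2]$ against $\bx = [1/10, 99/100]$ gives $\rA(\bu) = \sqrt{1/2} > \rA(\bx) \approx 0.6556$, so $\bx \prec_{\rA, \rB} \bu$, whereas $\rK_\alpha(\bx) = 0.1 + 0.89\alpha \ge 0.545 > 0.5 = \rK_\alpha(\bu)$ for every $\alpha \ge 0.5$, so $\bu \prec_{(\alpha, \beta)} \bx$.

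The delicate point is the boundary value $\alpha = 0.5$. The most natural candidate — intervals with $u_1 + u_2 = x_1 + x_2$ (hence equal $\rK_{0.5}$) — fails, because strict Schur-convexity (resp.\ concavity) of $\rA$ then aligns with the direction of $u_2$ versus $x_2$, so $\leqa_{\rA,\rB}$ ends up agreeing with the limit $(0.5,1)$- or $(0.5,0)$-order on those pairs (cf.\ Proposition~\ref{pro:07}). The fix, built into both constructions above, is to compare the degenerate interval $[1/2, 1/2]$ with a highly spread, off-center interval: this forces the nonlinear aggregate $\rA$ and the linear $\rK_\alpha$ to move in opposite directions, which is precisely the mismatch provided by strict Jensen-type inequalities for $x^2$ and $\sqrt{x}$.
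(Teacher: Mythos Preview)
Your proof is correct. Both arguments rely on Theorem~\ref{tw:schur} to produce an admissible pair with the required Schur-convexity/concavity, but the constructions of the witnessing intervals differ. The paper's proof works for \emph{any} strictly convex generator $f$: starting from intervals $\bu\subset\bx$ with $\rK_\alpha(\bu)=\rK_\alpha(\bx)$, it shows $\rA(\bu)<\rA(\bx)$ via a convexity inequality and then perturbs $u_1$ upward by continuity to force $\rK_\alpha(\widehat{\bu})>\rK_\alpha(\bx)$ while keeping $\rA(\widehat{\bu})<\rA(\bx)$. Your approach is more elementary and more explicit: you fix the specific generators $x^2$ and $\sqrt{x}$ and exhibit a \emph{single} pair of intervals (the degenerate $[1/2,1/2]$ against a highly spread, off-center interval) that witnesses the disagreement uniformly for every $\alpha$ in the relevant half-interval. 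This buys you a cleaner argument with no perturbation step and no dependence of the intervals on $\alpha$; the paper's argument, in exchange, shows that \emph{every} pair of the Theorem~\ref{tw:schur} type with strictly convex $f$ fails to coincide with the $(\alpha,\beta)$-order, which is a bit more than the proposition asks.
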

\begin{proof} 
We  take $\alpha\le 0.5,$ the proof of the case $\alpha\ge 0.5$ is analogous. 
Let $(\rA, \rB)\in\adm$ be the pair of AFs defined in Theorem~\ref{tw:schur}. 
Moreover, we assume that $f$ is a~strictly convex function on $[0,1]$, so $\rA$ is a~strictly Schur-convex function (see Section~\ref{sec:schur}).
Let $\bu,\bx \in \sLi$ be such that $u_1 < u_2 < x_2$  and $\rK_{\alpha}(\bu) = \rK_{\alpha}(\bx)$. 
Then, $\bu \subset \bx$ (see \cite[Lem.~1]{sussner2024}).
We  show that 
\begin{align}\label{coi:n1}
    f(u_1) + f(u_2) <  f(x_1) + f(x_2).
\end{align}
For $\alpha =0$, it is clear that   inequality \eqref{coi:n1} is true, as $x_1 = u_1$.
Assume that $\alpha > 0$. 
Since $f$ is strictly convex,\footnote{
A~continuous function $f\colon [0, 1] \to \mR$
is strictly convex (resp., strictly concave) if and only if  $f(x+\delta)-f(x)< f(y+\delta)-f(y)$ (resp., $f(x+\delta)-f(x) > f(y+\delta)-f(y)$) for any $x< y$ and $\delta>0$ such that $x+\delta, y+\delta \in [0,1]$ \cite[Sec.~1.4]{niculescu2018}.}
we have
\begin{align}\label{coi:n2}
    f(u_1)-f(x_1) = f(x_1+u_1-x_1) - f(x_1) < f(u_2+u_1-x_1)-f(u_2),
\end{align}
as $x_1 < u_1 < u_2.$
From  $\alpha \le 0.5$ and $\rK_{\alpha}(\bu) = \rK_{\alpha}(\bx)$, we get $u_1-x_1\le (1-\alpha)(u_1-x_1)/\alpha = x_2-u_2$.
Since $f$ is increasing, we obtain from \eqref{coi:n2} that $f(u_1)-f(x_1) < f(u_2+x_2-u_2)-f(u_2) = f(x_2)-f(u_2)$.
This proves \eqref{coi:n1}.

By the continuity and monotonicity of $f$ and  \eqref{coi:n1}, there is $\widehat{u}_1\in (u_1, u_2)$ such that 
$f(\widehat{u}_1) + f(u_2 ) < f(x_1) + f(x_2)$.  
Moreover, $\rK_{\alpha}(\widehat{\bu}) > \rK_{\alpha}(\bx)$, where $\widehat{\bu} = [\widehat{u}_1, u_2]$. 
Consequently, $\widehat{\bu} \prec_{\rA, \rB} \bx$ and $\bx \prec_{(\alpha, \beta)} \widehat{\bu}$.
\end{proof}

\section{Conclusion}

We have continued the line of research on admissibility of aggregation function pairs, building on the work of Bustince et al.~\cite{bustince2013}.
We have introduced several new examples of such pairs in some important classes of AFs, including quasi-linear means, Archimedean $t$-norms, and Archimedean $t$-conorms. 
For the latter families, we have obtained results that clearly determine whether a given pair consisting of an Archimedean $t$-norm and an Archimedean $t$-conorm is admissible. 
Similarly, in the case of quasi-linear mean pairs with identical weights, a complete characterization has been provided (see Corollary~\ref{cor:full}). However, identifying all admissible pairs of the form $(\rK^f_{w_1}, \rK^g_{w_2})$ with $w_1 \neq w_2$ remains an open problem.

Based on the constructions of strictly Schur-convex functions, we have identified the admissible order generated by an admissible pair of AFs, which does not coincide with any $(\alpha,\beta)$-order.
We propose, for future research, the characterization of all admissible pairs $(\rA, \rB)$ of AFs  for which $\leqa_{\rA,\rB}\,=\,\leqa_{(\alpha,\beta)}$ for some $\alpha\neq \beta$.




\bibliographystyle{acm}
\bibliography{biblio}

\end{document}